\numberwithin{equation}{section}
\theoremstyle{plain}
\newtheorem{theorem}{Theorem}[section]
\newtheorem{corollary}[theorem]{Corollary}
\newtheorem{lemma}{Lemma}[section]
\theoremstyle{definition}
\newtheorem{definition}[theorem]{Definition}
\theoremstyle{remark}
\newtheorem{remark}{Remark}[section]
\begin{document}

\title{Bohr radius for some classes of Harmonic mappings}
	\thanks{The work of the second author is supported by University Grant Commission, New-Delhi, India  under UGC-Ref. No.:1051/(CSIR-UGC NET JUNE 2017).}	
	
	\author[S. Sivaprasad Kumar]{S. Sivaprasad Kumar}
	\address{Department of Applied Mathematics, Delhi Technological University,
		Delhi--110042, India}
	\email{spkumar@dce.ac.in}

	\author[Kamaljeet]{Kamaljeet Gangania}
	\address{Department of Applied Mathematics, Delhi Technological University,
		Delhi--110042, India}
	\email{gangania.m1991@gmail.com}

\maketitle	
	
\begin{abstract} 
	We introduce a general class of sense-preserving harmonic mappings defined as follows:
\begin{equation*}
\mathcal{S}^0_{h+\bar{g}}(M):= \{f=h+\bar{g}: \sum_{m=2}^{\infty}(\gamma_m|a_m|+\delta_m|b_m|)\leq M, \; M>0    \},
\end{equation*}
where $h(z)=z+\sum_{m=2}^{\infty}a_mz^m$, $g(z)=\sum_{m=2}^{\infty}b_m z^m$ are analytic functions in $\mathbb{D}:=\{z\in\mathbb{C}: |z|\leq1 \}$ and 
\begin{equation*}
\gamma_m,\; \delta_m \geq \alpha_2:=\min \{\gamma_2, \delta_2\}>0,
\end{equation*}
for all $m\geq2$. We obtain Growth Theorem, Covering Theorem and derive the Bohr radius for the class $\mathcal{S}^0_{h+\bar{g}}(M)$. As an application of our results, we obtain the Bohr radius for many classes of harmonic univalent functions and some classes of univalent functions.
\end{abstract}
\vspace{0.5cm}
	\noindent \textit{2010 AMS Subject Classification}. 30C45, 30C50, 30C80.\\
	\noindent \textit{Keywords and Phrases}. Bohr radius, Harmonic mappings, Univalent analytic functions, Growth theorem.

\maketitle
	
\section{Introduction}
Let $\mathcal{H}$ be the class of complex valued harmonic functions $f$ (which satisfy the Laplacian equation $\Delta{f}=4f_{z\bar{z}}=0$) defined on the unit disk $\mathbb{D}:=\{z\in \mathbb{C}: |z|<1\}$, then we can write $f=h+\bar{g}$, where $h$ and $g$ are analytic and satifies $f(0)=g(0)$.  We say that $f$ is sense-preserving in $\mathbb{D}$ if the Jacobian $J_f:=|h'|^2-|g'|^2>0$. Let $\mathcal{H}_0$ be the  class of functions $f$ with $f_{\bar{z}}(0)=0$ and $f=h+\bar{g}$, where
$$h(z)=z+\sum_{m=2}^{\infty}a_mz^m \quad\text{and}\quad g(z)=\sum_{m=2}^{\infty}b_m z^m$$ 
are analytic functions in $\mathbb{D}$. Note that $\mathcal{H}_0$ contains the class $\mathcal{A}$ of normalized functions $f(0)=0=f_{z}(0)-1$ for the case $g\equiv0$. We denote the class of harmonic and univalent functions by $\mathcal{S}^0_{\mathcal{H}}$, which clearly contains the well known class of normalized univalent functions $\mathcal{S}$. Now we recall the definition of subordination. We say $f$ is subordinate to  $\phi$, written as $f\prec \phi$, if $f(z)=\phi(w(z))$, where $w$ is a Schwarz function.  Further if $\phi$ is univalent, then $f\prec \phi$ if and only if $f(\mathbb{D})\subseteq \phi(\mathbb{D})$ and $f(0)=\phi(0)$.

Recently, Cho and Dziok~\cite{ChoD-2020} considered the subclass of  $\mathcal{S}^0_{\mathcal{H}}$, motivated by Sakaguchi~\cite{guchi-1959} class of starlike functions with respect to the symmetric points using subordination, given by
\begin{equation*}
\mathcal{S}^{**}_{\mathcal{H}}(C,D):= \left\{f\in \mathcal{H}_0 : \frac{2\mathcal{D}_{\mathcal{H}}f(z)}{f(z)-f(-z)} \prec \frac{1+Cz}{1+Dz}, -D\leq C<D\leq1    \right\},
\end{equation*}
where $\mathcal{D}_{\mathcal{H}}f(z):= zh'(z)-\overline{zg'(z)}$. Further motivated by Silverman~\cite{silverman-1998}, they defined the class $\mathcal{S}^{**}_{\tau}(C,D):=\tau^0 \cap\mathcal{S}^{**}_{\mathcal{H}}(C,D)$, where $\tau^{a} (a\in \{0, 1\})$ is the class of functions in $\mathcal{H}_0$, where $a_m=-|a_m|$ and $b_m=(-1)^a|b_m|$.  Dziok~\cite{Dziok-2015} also considered the following classes involving Janowski functions:
\begin{equation*}
\mathcal{S}^{*}_{\mathcal{H}}(C,D):= \left\{f\in \mathcal{H}_0 : \frac{2\mathcal{D}_{\mathcal{H}}f(z)}{f(z)} \prec \frac{1+Cz}{1+Dz}, -D\leq C<D\leq1    \right\},
\end{equation*}
and $\mathcal{S}^{c}_{\mathcal{H}}(C,D):= \left\{f\in \mathcal{S}^0_{\mathcal{H}} : D_{\mathcal{H}}f(z) \in \mathcal{S}^{*}_{\mathcal{T}}(C,D) \right\}$, and further defined the classes $\mathcal{S}^{*}_{\tau}(C,D):=\tau^0 \cap \mathcal{S}^{*}_{\mathcal{H}}(C,D)$ and $\mathcal{S}^{c}_{\tau}(C,D):=\tau^1 \cap \mathcal{S}^{c}_{\mathcal{H}}(C,D)$. Singh~\cite{singh-1977} studied the subclass $\mathcal{F}(\lambda):=\left\{f\in \mathcal{A} : |f'(z)-1|<\lambda, \lambda\in(0, 1]  \right\}$ of close-to-convex functions. Further Nagpal and Ravichandran~\cite{nagravi-2014} considered its harmonic extension defined as:
$$\mathcal{F}^0_{\mathcal{H}}(\lambda):= \left\{f=h+\bar{g} : |f_z(z)-1|<\lambda-|f_{\bar{z}}(z)|, \lambda\in(0, 1]   \right\}.$$ 
Gao and Zohu~\cite{GaoZohu-2007} studied the subclass of close to convex functions,
$W(\mu,\rho):= \{f \in \mathcal{A} : \Re(h'(z)+\mu zh''(z))>\rho, \;\mu\geq0,0\leq\rho<1\}$. 
Rajbala and Prajapat~\cite{RajPRAJ} also studied a subclass of close-to-convex harmonic mappings defined as:
\begin{equation*}
W_{\mathcal{H}}^0(\mu,\rho):= \{f=h+\bar{g} \in \mathcal{H}_0 : \Re(h'(z)+\mu zh''(z)-\rho)> |g'(z)+\mu zg''(z)|\},
\end{equation*}
where $ \mu\geq0$ and $0\leq \rho<1$, which is the harmonic extension of $W(\mu,\rho)$ and generalizes the classes studied by Gosh and Vasudevarao~\cite{gosh-2019}, and Nagpal and Ravichandran~\cite{nagpal-2014}. In a similar way, Dixit and Porwal~\cite{DPorwal-2010} considered the class $R_\mathcal{H}(\beta)=\{f=h+\bar{g} : \Re\{h'(z)+g'(z) \} \leq \beta, 2\geq\beta>1 \}$, where
$h(z)=z+\sum_{m=2}^{\infty}|a_m|z^m$ and $g(z)=\sum_{m=1}^{\infty}|b_m|{\bar{z}}^m,$
where $|b_1|<1$. Now if we consider $b_1=0$, then we have the class 
$$R^{0}_{\mathcal{H}}(\beta):=\{f=h+\bar{g} \in\mathcal{H}_0 : \Re\{h'(z)+g'(z) \} \leq \beta, \beta>1 \},$$ 
consists of the functions with positive coefficients, which reduces to the class $R(\beta)$ studied by Uralegaddi for the case $g\equiv 0$. Altinkaya et al.~\cite{unif-q2018} studied the class $k-\widetilde{ST}^{-}_{q}(\alpha)$ of functions in $\mathcal{A}$ with negative coefficients associated with the conic domains defined by
$\Re(p(z))>k|p(z)-1|+\alpha$, where $0\leq k<\infty$, $0\leq\alpha<1$, $0<q<1$, $p(z)=z(\widetilde{D}_qf)(z)/f(z)$ and
$$(\widetilde{D}_qf)(z)=\left\{
\begin{array}{ll}
\frac{f(qz)-f(q^{-1}z)}{(q-q^{-1})z}, & z\neq0 \\
f'(0), & z=0
\end{array}
\right. \quad\text{and}\quad
[\widetilde{m}]_q =\frac{q^m-q^{-m}}{q-q^{-1}}.
$$

In literature, sufficient conditions for many classes of harmonic and analytic mappings are obtained in terms of coefficients. See \cite{unif-q2018,ChoD-2020,DPorwal-2010,Dziok-2015,nagravi-2014,RajPRAJ}. In a similar way, we introduce a new subclass of $\mathcal{H}_0$ as follows:
\begin{equation*}
\mathcal{S}^0_{h+\bar{g}}(M):= \{f=h+\bar{g}: \sum_{m=2}^{\infty}(\gamma_m|a_m|+\delta_m|b_m|)\leq M, \; M>0    \},
\end{equation*}
where
\begin{equation}\label{gd-condition}
\gamma_m,\; \delta_m \geq \alpha_2:=\min \{\gamma_2, \delta_2\}>0,
\end{equation}
for all $m\geq2$. Thus we observe that the classes $S^{**}_{\tau}(C,D)$, ${S}^*_{\tau}(C,D)$, ${S}^{c}_{\tau}(C,D)$ and $k-\widetilde{ST}^{-}_{q}(\alpha)$ are all become subclasses of $\mathcal{S}^0_{h+\bar{g}}(M)$ for some suitable choices of $\gamma_m, \delta_m$ and $M$.

Now let us recall the Bohr phenomenon for the harmonic mappings:
\begin{definition}
	Let $f(z)=h(z)+\overline{g(z)}=z+\sum_{m=2}^{\infty}a_m z^m+ \overline{\sum_{m=2}^{\infty}b_m {z}^m} \in \mathcal{H}_0 $. Then the Bohr-phenomenon is to find the constant $r^*\in (0,1]$ such that the inequality
	$r+\sum_{m=2}^{\infty}(|a_m|+|b_m|)r^m \leq d(f(0),\partial \Omega)$
	holds for all $|z|=r\leq r^*$, where $d(f(0),\partial\Omega)$ denotes the Euclidean distance between $f(0)$ and the boundary of $\Omega:=f(\mathbb{D})$.
	The largest such $r^*$ is called the Bohr radius.
\end{definition}

The work on Bohr radius originated with the work of H. Bohr~\cite{bohr1914} on the inequality $\sum_{m=0}^{\infty}|a_m|r^m\leq1$  for an analytic function having the power series $\sum_{m=0}^{\infty}a_mz^m$, which holds for $r\leq1/3$, known as Bohr Theorem. In recent times, finding the Bohr radius for such inequalities with different prospects is an active area of research. For some recent work on Bohr phenomenon related to the Ma-Minda~\cite{minda94} classes of starlike functions, bounded harmonic mappings and families of analytic functions defined by subordination and many more, see \cite{ahuja-2020,ali2017,jain2019,boas1997,bohr1914,ganga-2020,LiuPonu-2020,muhanna10,muhanna14} and references therein.

In this article, we find the sharp growth theorem, covering theorem and finally derive the Bohr radius for the class $\mathcal{S}^0_{h+\bar{g}}(M)$. As an application of our results, we obtain the Bohr radius for the classes $\mathcal{S}^{**}_{\tau}(C,D)$, ${S}^*_{\tau}(C,D)$, ${S}^{c}_{\tau}(C,D)$, $k-\widetilde{ST}^{-}_{q}(\alpha)$ and many more. Further, Bohr radius for the classes $W_{\mathcal{H}}^0(\mu,\rho)$ and $\mathcal{F}^0_{\mathcal{H}}(\lambda)$ is also derived.

\section{The class $\mathcal{S}^0_{h+\bar{g}}(M)$ and its applications}
From the condition \eqref{gd-condition}, we see that
\begin{equation}\label{main-gd-condition}
\sum_{m=2}^{\infty}(\gamma_m|a_m|+\delta_m|b_m|)\leq M
\end{equation}
implies
\begin{equation}\label{T-sum}
\sum_{m=2}^{\infty}(|a_m|+|b_m| ) \leq\frac{ M}{\alpha_2},
\end{equation}
where $\gamma_m, \delta_m$ and $\alpha_2$ are as defined in \eqref{gd-condition} and the equality in~\eqref{T-sum} holds for the function $f(z)=z+(M/\alpha_2)z^2$. Now using the inequality $|a|-|b|\leq |a\pm b|\leq |a|+|b|$ and then \eqref{T-sum}, we have for $|z|=r$
\begin{equation*}
|z|-\left|\sum_{m=2}^{\infty}a_mz^m+\overline{\sum_{m=2}^{\infty}b_m z^m}\right| \leq|f(z)|\leq |z|+\left|\sum_{m=2}^{\infty}a_mz^m+\overline{\sum_{m=2}^{\infty}b_m z^m}\right|,
\end{equation*}
which immediately yields the following growth theorem:

\begin{theorem}[Growth Theorem]\label{T-grththm}
	Let $f(z)=z+\sum_{m=2}^{\infty}a_m z^m+ \overline{\sum_{m=2}^{\infty}b_m {z}^m} \in \mathcal{S}^0_{h+\bar{g}}(M) $. Then 
	\begin{equation*}
	r-\frac{M}{\alpha_2}r^2 \leq |f(z)| \leq r+\frac{M}{\alpha_2}r^2 \quad (|z|=r).
	\end{equation*}
	The inequalities are sharp for the functions $f(z)=z\pm\frac{M}{\alpha_2}z^2$ and $f(z)=z\pm\frac{M}{\alpha_2}{\bar{z}}^2$ with the suitable choice of $\alpha_2$.
\end{theorem}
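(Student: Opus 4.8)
The plan is to run the estimate that has already been set up in the lines preceding the statement, and then to verify sharpness by exhibiting admissible extremal functions. First I would invoke condition \eqref{gd-condition}: since $\gamma_m,\delta_m\geq\alpha_2$ for every $m\geq2$, the hypothesis $\sum_{m=2}^{\infty}(\gamma_m|a_m|+\delta_m|b_m|)\leq M$ forces $\alpha_2\sum_{m=2}^{\infty}(|a_m|+|b_m|)\leq\sum_{m=2}^{\infty}(\gamma_m|a_m|+\delta_m|b_m|)\leq M$, which is precisely \eqref{T-sum}, namely $\sum_{m=2}^{\infty}(|a_m|+|b_m|)\leq M/\alpha_2$.

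Next, for $|z|=r\in(0,1)$ I would bound the ``tail'' of $f$. Using the triangle inequality $|a|-|b|\leq|a\pm b|\leq|a|+|b|$ on $f(z)=z+\sum_{m=2}^{\infty}a_mz^m+\overline{\sum_{m=2}^{\infty}b_m z^m}$, it suffices to estimate
\[
\left|\sum_{m=2}^{\infty}a_mz^m+\overline{\sum_{m=2}^{\infty}b_m z^m}\right|\leq\sum_{m=2}^{\infty}(|a_m|+|b_m|)r^m\leq r^2\sum_{m=2}^{\infty}(|a_m|+|b_m|)\leq\frac{M}{\alpha_2}r^2,
\]
where I used $r^m\leq r^2$ for $m\geq2$ and $r<1$, together with \eqref{T-sum}. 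Combining this with the two-sided triangle inequality gives $r-\frac{M}{\alpha_2}r^2\leq|f(z)|\leq r+\frac{M}{\alpha_2}r^2$, which is the claimed growth estimate; the case $r=1$ follows by continuity (or is immediate from $\overline{\mathbb{D}}$).

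Finally, for sharpness I would check that the candidate extremal functions actually lie in $\mathcal{S}^0_{h+\bar{g}}(M)$. For $f(z)=z\pm\frac{M}{\alpha_2}z^2$ one has $a_2=\pm M/\alpha_2$ and all other coefficients zero, so $\sum_{m=2}^{\infty}(\gamma_m|a_m|+\delta_m|b_m|)=\gamma_2\,(M/\alpha_2)$; choosing $\alpha_2=\gamma_2$ (i.e. $\gamma_2=\min\{\gamma_2,\delta_2\}$) this equals $M$, so $f$ is admissible and attains equality in both bounds at $z=\pm r$. Symmetrically, $f(z)=z\pm\frac{M}{\alpha_2}\bar z^2$ has $b_2=\pm M/\alpha_2$, giving $\delta_2\,(M/\alpha_2)=M$ when $\alpha_2=\delta_2$, which again attains equality. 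The only mild subtlety — the ``hard part'', such as it is — is this bookkeeping about which of $\gamma_2,\delta_2$ realizes the minimum $\alpha_2$, so that the stated extremal function genuinely saturates the coefficient functional; the analytic estimate itself is routine.
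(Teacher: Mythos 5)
Your proposal is correct and follows essentially the same route as the paper: deduce $\sum_{m\geq2}(|a_m|+|b_m|)\leq M/\alpha_2$ from condition \eqref{gd-condition}, apply the two-sided triangle inequality together with $r^m\leq r^2$, and verify sharpness via $f(z)=z\pm\frac{M}{\alpha_2}z^2$ (resp.\ $\bar z^2$) with $\alpha_2$ chosen as the coefficient realizing the minimum. Your explicit check that the extremal functions satisfy the coefficient condition only when $\alpha_2=\gamma_2$ (resp.\ $\alpha_2=\delta_2$) is exactly what the paper's phrase ``with the suitable choice of $\alpha_2$'' leaves implicit.
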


From the above result, we also see that functions in the class $\mathcal{S}^0_{h+\bar{g}}(M)$ are bounded. Now taking $r$ tending to $1^{-}$, we get the covering theorem:
\begin{corollary}[Covering Theorem]\label{covering}
	Let $f(z)=z+\sum_{m=2}^{\infty}a_m z^m+ \overline{\sum_{m=2}^{\infty}b_m {z}^m} \in \mathcal{S}^0_{h+\bar{g}}(M) $. Then
	\begin{equation*}
	\left\{w\in \mathbb{C}: |w|\leq 1-\frac{M}{\alpha_2}  \right\} \subset f(\mathbb{D}).
	\end{equation*}
\end{corollary}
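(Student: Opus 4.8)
The plan is to read this off Theorem~\ref{T-grththm} by letting $r\to1^-$, the only real task being to justify that a lower bound for $|f|$ on circles shrinking to $\partial\mathbb{D}$ produces a covering disk. I would assume $M<\alpha_2$ (otherwise $1-M/\alpha_2\le0$ and nothing is claimed) and fix $w_0$ with $|w_0|<1-M/\alpha_2$. Since $r\mapsto r-\frac{M}{\alpha_2}r^2$ is continuous and tends to $1-\frac{M}{\alpha_2}$ as $r\to1^-$, I would first choose $r_0\in(0,1)$ with $|w_0|<r_0-\frac{M}{\alpha_2}r_0^2$; Theorem~\ref{T-grththm} then gives $|f(z)|\ge r_0-\frac{M}{\alpha_2}r_0^2>|w_0|$ for all $|z|=r_0$, so $f$ omits the value $w_0$ on that circle.

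Next I would argue that $w_0$ is attained inside $|z|<r_0$. The quick route uses that a nonconstant sense-preserving harmonic mapping is open, so every point of $\partial f(\mathbb{D})$ is a cluster value of $f(z)$ as $|z|\to1^-$ and hence has modulus at least $\liminf_{r\to1^-}(r-\frac{M}{\alpha_2}r^2)=1-\frac{M}{\alpha_2}$; thus the open disk $|w|<1-\frac{M}{\alpha_2}$ lies in the connected open set $f(\mathbb{D})$, because the segment from $f(0)=0$ to $w_0$ cannot meet $\partial f(\mathbb{D})$. If one prefers to avoid openness, the same conclusion follows from a degree argument: writing $f(z)=z+\varphi(z)$ with $\varphi(z)=\sum_{m\ge2}a_mz^m+\overline{\sum_{m\ge2}b_mz^m}$, the estimate \eqref{T-sum} gives $|\varphi(z)|\le\frac{M}{\alpha_2}r_0^2<r_0-|w_0|\le|z-w_0|$ on $|z|=r_0$, so the linear homotopy $z\mapsto z+t\varphi(z)$ never meets $w_0$ there, whence $n(f|_{|z|=r_0},w_0)=n(|z|=r_0,w_0)=1\ne0$; a continuous map of the closed disk $|z|\le r_0$ whose boundary trace winds nontrivially about $w_0$ must assume the value $w_0$, and since $|f|>|w_0|$ on $|z|=r_0$ this happens at some $z$ with $|z|<r_0$.

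Either way $w_0\in f(\mathbb{D})$, and since $w_0$ was an arbitrary point with $|w_0|<1-\frac{M}{\alpha_2}$ we obtain $\{w:|w|<1-\frac{M}{\alpha_2}\}\subseteq f(\mathbb{D})$, i.e. $d(f(0),\partial f(\mathbb{D}))\ge1-\frac{M}{\alpha_2}$, which is the form in which the statement is used later for the Bohr radius. I do not expect a genuine obstacle: the content is entirely in the growth bound of Theorem~\ref{T-grththm}, and its extremal functions $f(z)=z\pm\frac{M}{\alpha_2}z^2$ show that the radius $1-\frac{M}{\alpha_2}$ is best possible. The only point meriting a line of care is precisely the passage from ``$|f|$ large on every circle $|z|=r_0$ with $r_0$ close to $1$'' to ``$f$ covers the disk of radius $1-\frac{M}{\alpha_2}$'', which is handled by either of the two arguments above.
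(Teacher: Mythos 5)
Your proposal is correct, and its starting point is exactly the paper's: the paper proves this corollary in one line, by taking $r\to 1^-$ in the lower bound $|f(z)|\ge r-\frac{M}{\alpha_2}r^2$ of Theorem~\ref{T-grththm}. Where you differ is that you actually justify the step the paper leaves implicit, namely why a lower bound for $|f|$ on circles $|z|=r_0$ close to $\partial\mathbb{D}$ forces $f(\mathbb{D})$ to contain the disk $|w|<1-\frac{M}{\alpha_2}$. Of your two justifications, the degree/winding argument is the better fit here: the class $\mathcal{S}^0_{h+\bar{g}}(M)$ is defined solely by the coefficient inequality, so univalence or sense-preservation (hence openness of $f$) is not formally available, whereas your Rouch\'e-type homotopy $z\mapsto z+t\varphi(z)$ on $|z|=r_0$, which uses only \eqref{T-sum} and the choice $|w_0|<r_0-\frac{M}{\alpha_2}r_0^2$, needs nothing beyond continuity. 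So the two arguments buy different things: the paper's (and your first paragraph's) derivation is the quick reading-off from the Growth Theorem that the authors intended, while your degree argument makes the covering statement rigorous for every function satisfying the coefficient condition, which is the generality in which the corollary is actually stated and later used to bound $d(f(0),\partial f(\mathbb{D}))$ in Theorem~\ref{mainthm}.
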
	

Now we are ready to obtain the Bohr-radius for the class $\mathcal{S}^0_{h+\bar{g}}(M)$.
\begin{theorem}\label{mainthm}
	Let  $f(z)=z+\sum_{m=2}^{\infty}a_m z^m+ \overline{\sum_{m=2}^{\infty}b_m {z}^m} \in \mathcal{S}^0_{h+\bar{g}}(M) $. Then
	\begin{equation*}
	|z|+\sum_{m=2}^{\infty}(|a_m|+|b_m|) |z|^m \leq d(f(0), \partial{f(\mathbb{D})})
	\end{equation*}
	for $|z|\leq r^*$, where
	\begin{equation*}
	r^*= \frac{-1+\sqrt{1+4\left(\frac{M}{\alpha_2} \right) \left(1-\frac{M}{\alpha_2} \right)}}{2\left(\frac{M}{\alpha_2} \right)}.
	\end{equation*}
	Bohr radius $r^*$ is achieved by the function $f(z)=z-\frac{M}{\alpha_2}z^2$.
\end{theorem}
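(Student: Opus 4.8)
The plan is to reduce the claimed Bohr inequality to a quadratic inequality in $r$. The lower bound for the distance $d(f(0),\partial f(\mathbb D))$ will come from the Covering Theorem, and the upper bound for the Bohr sum from the crude estimate $r^m\le r^2$ ($m\ge 2$, $0\le r\le 1$) together with \eqref{T-sum}.

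First, since $f(0)=0$, Corollary~\ref{covering} gives $\{|w|\le 1-M/\alpha_2\}\subset f(\mathbb D)$, so $d(f(0),\partial f(\mathbb D))\ge 1-M/\alpha_2$; here one tacitly works in the range $M\le\alpha_2$, since otherwise this bound is vacuous and, correspondingly, $r^*$ is not a real number in $(0,1]$. Next, for $|z|=r\le 1$ and every $m\ge 2$ one has $r^m\le r^2$, so \eqref{T-sum} gives
\[
r+\sum_{m=2}^{\infty}(|a_m|+|b_m|)r^m\ \le\ r+r^2\sum_{m=2}^{\infty}(|a_m|+|b_m|)\ \le\ r+\frac{M}{\alpha_2}r^2 .
\]
Putting these together, the asserted inequality will hold as soon as $r+\frac{M}{\alpha_2}r^2\le 1-\frac{M}{\alpha_2}$, that is, $\frac{M}{\alpha_2}r^2+r-(1-\frac{M}{\alpha_2})\le 0$. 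The left-hand side is an upward parabola in $r$ with a unique non-negative zero, which is exactly the $r^*$ displayed in the statement; hence the inequality holds for $0\le r\le r^*$, proving that the Bohr radius is at least $r^*$.

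For sharpness I would take $f_0(z)=z-\frac{M}{\alpha_2}z^2$. To guarantee $f_0\in\mathcal S^0_{h+\bar g}(M)$ one makes the normalization $\alpha_2=\gamma_2$ (the ``suitable choice'' mentioned in the statement), so that $\gamma_2|a_2|=\gamma_2\cdot\frac{M}{\alpha_2}=M$ and \eqref{main-gd-condition} becomes an equality. On the boundary, $|f_0(e^{i\theta})|=|1-\frac{M}{\alpha_2}e^{i\theta}|$ has minimum $1-\frac{M}{\alpha_2}$ at $\theta=0$; since $\partial f_0(\mathbb D)\subseteq f_0(\partial\mathbb D)$ by the open mapping theorem and $f_0(1)$ is a boundary point of $f_0(\mathbb D)$, this forces $d(f_0(0),\partial f_0(\mathbb D))=1-\frac{M}{\alpha_2}$. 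For $f_0$ the Bohr sum is exactly $r+\frac{M}{\alpha_2}r^2$, which by the defining equation of $r^*$ equals $1-\frac{M}{\alpha_2}$ at $r=r^*$ and strictly exceeds it for $r>r^*$; hence no larger radius works, so $r^*$ is the Bohr radius.

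The routine estimates in the first two paragraphs are the easy part. I expect the only genuine care to be needed in the sharpness argument: verifying that $f_0$ actually lies in the class (which is precisely what forces $\alpha_2=\gamma_2$) and that its covering distance is \emph{exactly} $1-M/\alpha_2$ rather than merely bounded below by it — a point where one should also note that $f_0$ need not be univalent when $M/\alpha_2>1/2$, so the containment $\partial f_0(\mathbb D)\subseteq f_0(\partial\mathbb D)$ must be invoked in place of a naive ``boundary-goes-to-boundary'' argument.
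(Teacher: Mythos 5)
Your main inequality argument is exactly the paper's: the lower bound $d(f(0),\partial f(\mathbb{D}))\ge 1-M/\alpha_2$ from Corollary~\ref{covering}, the estimate $r+\sum_{m\ge2}(|a_m|+|b_m|)r^m\le r+\frac{M}{\alpha_2}r^2$ obtained from \eqref{T-sum} together with $r^m\le r^2$, and the identification of $r^*$ as the positive root of $\frac{M}{\alpha_2}r^2+r-\bigl(1-\frac{M}{\alpha_2}\bigr)=0$ (the paper phrases this via the increasing function $H$ and the intermediate value theorem, but it is the same computation). Your explicit remark that one must tacitly assume $M\le\alpha_2$ for any of this to be non-vacuous is correct and is a point the paper leaves unsaid.

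The gap is in your sharpness paragraph, at precisely the step you flagged as delicate. Write $t=M/\alpha_2$ and $f_0(z)=z-tz^2$. The inclusion $\partial f_0(\mathbb{D})\subseteq f_0(\partial\mathbb{D})$ only yields $d(0,\partial f_0(\mathbb{D}))\ge\min_{|z|=1}|f_0(z)|=1-t$; it does not show that $f_0(1)=1-t$ is a boundary point of $f_0(\mathbb{D})$, and for $t>1/2$ this is false: the equation $z-tz^2=1-t$ has the second root $z=(1-t)/t\in(0,1)$, so $1-t\in f_0(\mathbb{D})$, an open set, and hence $1-t$ is an interior point of the image. In fact one can check that for $t>1/2$ the boundary of $f_0(\mathbb{D})$ is only the outer loop of the curve $e^{i\theta}-te^{2i\theta}$, whose closest point to the origin is the self-intersection point $w=t$, so $d(0,\partial f_0(\mathbb{D}))=t>1-t$; consequently $f_0$ satisfies the Bohr inequality beyond $r^*$ and does not witness sharpness there. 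Your argument (and indeed the theorem's sharpness claim) is therefore complete only when $M/\alpha_2\le 1/2$, in which case $f_0$ is univalent on $\overline{\mathbb{D}}$ (its critical point $1/(2t)$ lies outside $\mathbb{D}$), $\partial f_0(\mathbb{D})=f_0(\partial\mathbb{D})$, and your computation goes through. To be fair, the paper's own proof merely asserts equality at $r=r^*$ for this same $f_0$ without verifying the distance, so you are no worse off than the paper on this point—but the specific justification you supply (``$f_0(1)$ is a boundary point'') does not close the hole, and the open-mapping containment alone cannot.
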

\begin{proof}
	From the Growth Theorem~\ref{T-grththm} and Covering Theorem~\ref{covering}, we see that the distance between origin and the boundary of $f(\mathbb{D})$ satisfies
	\begin{equation}\label{T-deq}
	d(f(0), \partial{f(\mathbb{D})}) \geq    1-\frac{M}{\alpha_2}.
	\end{equation} 
	Let us consider the continuous function defined in $(0,1)$ as 
	\begin{equation*}
	H(r):= 	r+\frac{M}{\alpha_2}r^2-\left( 1-\frac{M}{\alpha_2} \right),
	\end{equation*}
	such that $H'(r)>0$ for $r\in(0,1)$ with $H(0)<0$ and $H(1)>0$. Therefore, by Intermediate Value Theorem for continuous functions, we let $r^*$ be the unique positive root in $(0,1)$ as mentioned in the Theorem statement, and thus for $r=r^*$, we have
	\begin{equation*}
	r^*+\frac{M}{\alpha_2}(r^*)^2 = 1-\frac{M}{\alpha_2}.
	\end{equation*}
	Now from \eqref{T-sum}, \eqref{T-deq} and the above equality, it follows that
	\begin{equation*}
	|z|+\sum_{m=2}^{\infty}(|a_m|+|b_m|) |z|^m\leq  r+\frac{M}{\alpha_2}r^2\leq r^*+\frac{M}{\alpha_2}(r^*)^2 \leq d(f(0), \partial{f(\mathbb{D})}) 
	\end{equation*}
	for $|z|=r\leq r^*$. Let us consider the analytic function $f: \mathbb{D} \rightarrow \mathbb{C}$
	$$f(z)=z-\frac{M}{\alpha_2}z^2,$$
	which by suitably choosing $\alpha_2$ and using \eqref{main-gd-condition} belongs to $\mathcal{S}^0_{h+\bar{g}}(M)$. Further for  $|z|=r^*$, we have
	$$|z|+\sum_{m=2}^{\infty}(|a_m|+|b_m|) |z|^m=d(f(0), \partial{f(\mathbb{D})}).$$ 
	Hence the result is sharp. \qed
\end{proof}

\begin{remark}
	Note that we can easily extend our results by considering the analytic functions of the following form:
	$$h(z)=z+\sum_{m=k}^{\infty}a_mz^m \quad\text{and}\quad g(z)=\sum_{m=k}^{\infty}b_m z^m, \quad (k\geq2)$$
	and the class
	\begin{equation*}
	\mathcal{S}^0_{h+\bar{g}}(k, M):= \{f=h+\bar{g}: \sum_{m=k}^{\infty}(\gamma_m|a_m|+\delta_m|b_m|)\leq M, \; M>0    \},
	\end{equation*}
	where
	\begin{equation*}
	\gamma_m,\; \delta_m \geq \alpha_k:=\min \{\gamma_k, \delta_k\}>0,
	\end{equation*}
	for all $m\geq k$. Thus we see that $\mathcal{S}^0_{h+\bar{g}}(2, M)\equiv \mathcal{S}^0_{h+\bar{g}}(M)$. Precisely, for the class $\mathcal{S}^0_{h+\bar{g}}(k, M)$, we have
	\begin{equation*}
	r-\frac{M}{\alpha_k}r^k \leq |f(z)| \leq r+\frac{M}{\alpha_k}r^k, \quad (|z|=r)
	\end{equation*}
	and the Bohr radius $r^*$ is the unique positive root in $(0,1)$ of the equation
	\begin{equation*}
	r+\frac{M}{\alpha_k}(r^k)-\left(1- \frac{M}{\alpha_k} \right)=0.
	\end{equation*}
	Now we can also obtain the Bohr radius for the classes (see \cite[Theorem~8,~9]{Dziok-2019}) studied by Dziok~\cite{Dziok-2019} .
\end{remark}

\subsection{\bf{Applications}}
Cho and Dziok~\cite{ChoD-2020} showed that $f\in \mathcal{S}^{**}_{\tau}(C,D)$  if and only if 
\begin{equation}\label{necSuf-S}
\sum_{m=2}^{\infty}(|\alpha_m||a_m|+|\beta_m||b_m|) \leq D-C,
\end{equation}
where
$$\alpha_m=m(1+D)-\frac{(1+C)(1-(-1)^m)}{2}$$
and
$$\beta_m=m(1+D)+\frac{(1+C)(1-(-1)^m)}{2}.$$
Note that for all $m\geq2$, we have $\alpha_m< \beta_m$, which shows that $0<\alpha_2\leq \alpha_m <\beta_m$. Therefore from \eqref{necSuf-S} we obtain that
\begin{equation*}\label{coefsum-S}
\sum_{m=2}^{\infty}(|a_m|+|b_m|) \leq \frac{D-C}{\alpha_2}
\end{equation*} 
and also the condition in \eqref{gd-condition} holds by choosing $\gamma_m=\alpha_m$, $\delta_m=\beta_m$ and $M=D-C$.  Thus using Theorem~\ref{covering} and Theorem~\ref{mainthm}, we have:
\begin{theorem}
	Let $f(z)=z+\sum_{m=2}^{\infty}a_m z^m+ \overline{\sum_{m=2}^{\infty}b_m {z}^m} \in  \mathcal{S}^{**}_{\tau}(C,D)$. Then
	\begin{equation*}
	r-\frac{D-C}{2(1+D)}r^2 \leq |f(z)| \leq r+\frac{D-C}{2(1+D)}r^2 \quad (|z|=r)
	\end{equation*}
	and
	\begin{equation*}
	|z|+\sum_{m=2}^{\infty}(|a_m|+|b_m|) |z|^m \leq d(f(0), \partial{f(\mathbb{D})})
	\end{equation*}
	for $|z|\leq r^*$, where
	\begin{equation*}\label{Sr*}
	r^*= \frac{-1+\sqrt{1+4\left(\frac{D-C}{\alpha_2}\right)\left(1-\frac{D-C}{\alpha_2}\right)}}{2\left(\frac{D-C}{\alpha_2}\right)},
	\end{equation*}
	where $\alpha_2$ is as defined in \eqref{necSuf-S}. Bohr radius $r^*$ is achieved by the function $f(z)=z-\frac{D-C}{2(1+D)}z^2$.
\end{theorem}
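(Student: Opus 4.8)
The plan is to recognize this statement as a direct specialization of the general results already established, namely the Growth Theorem~\ref{T-grththm}, the Covering Theorem~\ref{covering}, and Theorem~\ref{mainthm}, once the coefficient functional defining $\mathcal{S}^{**}_{\tau}(C,D)$ has been matched against the defining sum of $\mathcal{S}^0_{h+\bar{g}}(M)$. First I would invoke the Cho--Dziok characterization \eqref{necSuf-S}, which says that membership in $\mathcal{S}^{**}_{\tau}(C,D)$ is precisely a constraint of the form $\sum_{m=2}^{\infty}(\gamma_m|a_m|+\delta_m|b_m|)\le M$ with $\gamma_m=|\alpha_m|=\alpha_m$, $\delta_m=|\beta_m|=\beta_m$, and $M=D-C$; note $\alpha_m,\beta_m>0$ since $-D\le C<D$ forces $D>0$, hence $1+D>0$.

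Next I would check the structural hypothesis \eqref{gd-condition}. Using $\alpha_m<\beta_m$ and $\alpha_2\le\alpha_m$ for all $m\ge2$ (as already observed in the text preceding the theorem), one gets $\gamma_m,\delta_m\ge\min\{\alpha_2,\beta_2\}>0$, so that $\mathcal{S}^{**}_{\tau}(C,D)\subseteq\mathcal{S}^0_{h+\bar{g}}(D-C)$. The one small computation to carry out here is the value of the relevant constant: for $m=2$ we have $1-(-1)^m=0$, so both $\alpha_2$ and $\beta_2$ equal $2(1+D)$, whence $\min\{\alpha_2,\beta_2\}=2(1+D)$ and therefore $M/\alpha_2=(D-C)/(2(1+D))$.

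With these identifications in hand, the growth inequalities follow verbatim from Theorem~\ref{T-grththm} after substituting $M/\alpha_2=(D-C)/(2(1+D))$; the covering statement follows from Corollary~\ref{covering}; and the Bohr inequality together with the formula for $r^*$ follows from Theorem~\ref{mainthm}, keeping $M/\alpha_2$ in the unsimplified form $(D-C)/\alpha_2$ as written in the statement. For sharpness I would verify that $f(z)=z-\frac{D-C}{2(1+D)}z^2$ genuinely lies in $\mathcal{S}^{**}_{\tau}(C,D)$: here $|a_2|=(D-C)/(2(1+D))$ and all other coefficients vanish, so the left side of \eqref{necSuf-S} equals $\alpha_2\cdot\frac{D-C}{2(1+D)}=D-C$, i.e. equality holds, and this is exactly the extremal function of Theorem~\ref{mainthm} for the present class.

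I do not anticipate any genuine obstacle: the argument reduces to the two bookkeeping steps ($\alpha_2=\beta_2=2(1+D)$ and the equality case of \eqref{necSuf-S}) together with citing the already-proved general theorems. The only point requiring a moment of care is notational: the symbol $\alpha_2$ appearing in \eqref{necSuf-S} (the Janowski coefficient) coincides here with the symbol $\alpha_2$ of \eqref{gd-condition} (the minimum $\min\{\gamma_2,\delta_2\}$) precisely because $\alpha_2=\beta_2$, and it is this coincidence that makes the $r^*$ in the statement consistent with the $r^*$ of Theorem~\ref{mainthm}.
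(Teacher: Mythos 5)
Your proposal is correct and matches the paper's treatment: the paper also derives this theorem purely as an application of Theorem~\ref{T-grththm}, Corollary~\ref{covering} and Theorem~\ref{mainthm} after invoking the Cho--Dziok characterization \eqref{necSuf-S} and checking that $\gamma_m=\alpha_m$, $\delta_m=\beta_m$, $M=D-C$ satisfy \eqref{gd-condition}. Your explicit computation $\alpha_2=\beta_2=2(1+D)$ and the verification that the extremal function attains equality in \eqref{necSuf-S} are exactly the bookkeeping steps the paper leaves implicit.
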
	

Dziok~\cite{Dziok-2015} proved that $f\in {S}^*_{\tau}(C,D)$ if and only if 
\begin{equation}\label{St-NS}
\sum_{m=2}^{\infty}(\alpha_m|a_m|+\beta_m|b_m|) \leq D-C
\end{equation}
and $f\in {S}^{c}_{\tau}(C,D)$ if and only if 
\begin{equation}\label{Sc-NS}
\sum_{m=2}^{\infty}(m\alpha_m|a_m|+m\beta_m|b_m|) \leq D-C,
\end{equation}
where
$\alpha_m=m(1+D)-(1+C)$ and $\beta_m=m(1+D)+(1+C).$ We note that $\beta_m> \alpha_m\geq \alpha_2>0$ for all $m\geq2$. Therefore from~\eqref{St-NS}, we obtain that
\begin{equation*}\label{St-Coefsum}
\sum_{m=2}^{\infty}(|a_m|+|b_m|) \leq \frac{D-C}{\alpha_2}.
\end{equation*} 
Now choosing $\gamma_m=\alpha_m$, $\delta_m=\beta_m$ and $M=D-C$, the condition \eqref{gd-condition} holds and thus using Theorem~\ref{covering} and Theorem~\ref{mainthm} we get:
\begin{theorem}
	Let $f(z)=z+\sum_{m=2}^{\infty}a_m z^m+ \overline{\sum_{m=2}^{\infty}b_m {z}^m} \in  {S}^*_{\tau}(C,D)$. Then
	\begin{equation*}
	r-\frac{D-C}{1+2D-C}r^2\leq |f(z)|\leq r+\frac{D-C}{1+2D-C}r^2, \quad (|z|=r)
	\end{equation*}
	and
	\begin{equation*}
	|z|+\sum_{m=2}^{\infty}(|a_m|+|b_m|) |z|^m \leq d(f(0), \partial{f(\mathbb{D})})
	\end{equation*}
	for $|z|\leq r^*$, where
	\begin{equation*}
	r^*= \frac{-1+\sqrt{1+4\left(\frac{D-C}{1+2D-C}\right)\left(1-\frac{D-C}{1+2D-C}\right)}}{2\left(\frac{D-C}{1+2D-C}\right)}.
	\end{equation*}
	Bohr radius $r^*$ is achieved by the function $f(z)=z-\frac{D-C}{1+2D-C}z^2$.
\end{theorem}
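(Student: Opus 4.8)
The plan is to obtain this statement as a direct specialization of Theorem~\ref{mainthm}, the only real work being to match the coefficient functional of ${S}^*_{\tau}(C,D)$ to the defining inequality of $\mathcal{S}^0_{h+\bar{g}}(M)$. I would begin from Dziok's characterization \eqref{St-NS}: $f\in {S}^*_{\tau}(C,D)$ if and only if $\sum_{m=2}^{\infty}(\alpha_m|a_m|+\beta_m|b_m|)\leq D-C$, where $\alpha_m=m(1+D)-(1+C)$ and $\beta_m=m(1+D)+(1+C)$. Setting $\gamma_m=\alpha_m$, $\delta_m=\beta_m$ and $M=D-C$ exhibits ${S}^*_{\tau}(C,D)$ as a subclass of $\mathcal{S}^0_{h+\bar{g}}(M)$, provided the hypothesis \eqref{gd-condition} is verified.

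The one step requiring a short argument is that verification. Since $1+D>0$, both $m\mapsto\alpha_m$ and $m\mapsto\beta_m$ are strictly increasing in $m$, and $\alpha_m<\beta_m$ for every $m$; hence $\min_{m\geq2}\{\gamma_m,\delta_m\}=\alpha_2=2(1+D)-(1+C)=1+2D-C$. Moreover $\alpha_2>0$, because $-D\leq C<D\leq1$ gives $1+2D-C>1+D-C>0$. So $\alpha_2=1+2D-C$ is the constant appearing in \eqref{gd-condition}, and consequently $M/\alpha_2=(D-C)/(1+2D-C)$, which also shows $M/\alpha_2<1$ (equivalently, the covering radius $1-M/\alpha_2=(1+D)/(1+2D-C)$ is positive), so that Theorem~\ref{mainthm} applies with a non-vacuous distance bound.

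With this identification in hand, the growth bounds are exactly Theorem~\ref{T-grththm} with $M/\alpha_2$ replaced by $(D-C)/(1+2D-C)$, and the Bohr inequality with the stated radius $r^*$ is Theorem~\ref{mainthm} after the same substitution. For sharpness I would take $f(z)=z-\frac{D-C}{1+2D-C}z^2$, so that $a_2=-\frac{D-C}{1+2D-C}$ and all remaining coefficients vanish; then $\alpha_2|a_2|=D-C$, the equality case of \eqref{St-NS}, whence $f\in {S}^*_{\tau}(C,D)$ (and $a_2=-|a_2|$ is consistent with membership in $\tau^0$). Evaluating at $|z|=r^*$ and using that $r^*$ is the positive root of $r+\frac{D-C}{1+2D-C}r^2=1-\frac{D-C}{1+2D-C}$ shows that $|z|+\sum_{m=2}^{\infty}(|a_m|+|b_m|)|z|^m$ coincides with the Covering Theorem lower bound for $d(f(0),\partial f(\mathbb{D}))$ for this $f$, which is therefore attained. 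The main — indeed the only — obstacle is the elementary but necessary check that $\alpha_2=\min_{m\geq2}\{\gamma_m,\delta_m\}$ and that this quantity is strictly positive; everything else is substitution into the general theorem.
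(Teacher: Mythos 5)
Your proposal is correct and follows essentially the same route as the paper: invoke Dziok's characterization \eqref{St-NS}, set $\gamma_m=\alpha_m$, $\delta_m=\beta_m$, $M=D-C$ so that $\alpha_2=1+2D-C$ and \eqref{gd-condition} holds, and then read off the growth bounds and Bohr radius from Theorems~\ref{T-grththm}, \ref{covering} and~\ref{mainthm}. Your explicit checks that $\alpha_2>0$, that $M/\alpha_2<1$, and that the extremal function $z-\frac{D-C}{1+2D-C}z^2$ satisfies equality in \eqref{St-NS} and lies in $\tau^0$ are just the details the paper leaves implicit.
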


Again using $\beta_m> \alpha_m\geq \alpha_2$ for all $m\geq2$ in \eqref{Sc-NS}, we obtain that if $f\in {S}^{c}_{\tau}(C,D)$, then the following inequality holds:
$$\sum_{m=2}^{\infty}(|a_m|+|b_m|) \leq \frac{D-C}{2\alpha_2}.$$
Now choosing $\gamma_m=m\alpha_m$, $\delta_m=m\beta_m$ and $M=D-C$, the condition \eqref{gd-condition} holds and thus using Theorem~\ref{covering} and~\ref{mainthm}, we obtain the following result:
\begin{theorem}
	Let $f(z)=z+\sum_{m=2}^{\infty}a_m z^m+ \overline{\sum_{m=2}^{\infty}b_m {z}^m} \in  {S}^{c}_{\tau}(C,D)$. Then
	\begin{equation*}
	r-\frac{D-C}{2(1+2D-C)}r^2 \leq |f(z)|\leq r+\frac{D-C}{2(1+2D-C)}r^2, \quad (|z|=r)
	\end{equation*}
	and
	\begin{equation*}
	|z|+\sum_{m=2}^{\infty}(|a_m|+|b_m|) |z|^m \leq d(f(0), \partial{f(\mathbb{D})})
	\end{equation*}
	for $|z|\leq r^*$, where $\alpha_2$ is defined in \eqref{Sc-NS} and
	\begin{equation*}
	r^*= \frac{-1+\sqrt{1+2\left(\frac{D-C}{\alpha_2}\right)\left(1-\frac{D-C}{\alpha_2}\right)}}{\left(\frac{D-C}{\alpha_2}\right)}.
	\end{equation*}
	Bohr radius $r^*$ is achieved by the function $f(z)=z-\frac{D-C}{2(1+2D-C)}z^2$.
\end{theorem}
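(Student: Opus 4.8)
The plan is to read this theorem as a direct specialization of the machinery already set up in Section~2. First I would invoke Dziok's coefficient characterization, namely that $f\in S^{c}_{\tau}(C,D)$ if and only if \eqref{Sc-NS} holds, with $\alpha_m=m(1+D)-(1+C)$ and $\beta_m=m(1+D)+(1+C)$. Since $m\beta_m>m\alpha_m\ge 2\alpha_2>0$ for every $m\ge2$, the assignment $\gamma_m=m\alpha_m$, $\delta_m=m\beta_m$, $M=D-C$ satisfies the structural hypothesis \eqref{gd-condition}, and the quantity $\min\{\gamma_2,\delta_2\}$ that plays the role of $\alpha_2$ in the general theorems equals $\gamma_2=2\alpha_2=2(1+2D-C)$. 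Hence every $f\in S^{c}_{\tau}(C,D)$ belongs to $\mathcal{S}^0_{h+\bar{g}}(M)$ with $M/\min\{\gamma_2,\delta_2\}=(D-C)/\bigl(2(1+2D-C)\bigr)$; dividing \eqref{Sc-NS} through by the smallest weight $2\alpha_2$ also records the auxiliary bound $\sum_{m=2}^{\infty}(|a_m|+|b_m|)\le (D-C)/(2\alpha_2)$, which is exactly what the Bohr sum needs.

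With this identification in hand, the growth estimate is Theorem~\ref{T-grththm} after substituting $M/\alpha_2=(D-C)/\bigl(2(1+2D-C)\bigr)$, and the covering statement is Corollary~\ref{covering} with the same constant. The Bohr inequality and the formula for $r^*$ then come straight from Theorem~\ref{mainthm}: $r^*$ is the unique root in $(0,1)$ of $r+\tfrac{D-C}{2(1+2D-C)}r^2=1-\tfrac{D-C}{2(1+2D-C)}$, which one rewrites in the displayed closed form in terms of $\alpha_2=1+2D-C$. So there is essentially nothing to prove beyond bookkeeping; the only point requiring care is to keep the two roles of ``$\alpha_2$'' straight — the local $\alpha_2=1+2D-C$ coming from \eqref{Sc-NS} versus the $\min\{\gamma_2,\delta_2\}=2\alpha_2$ demanded by the general theorems.

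For sharpness I would exhibit $f(z)=z-\tfrac{D-C}{2(1+2D-C)}z^2$. Substituting into \eqref{Sc-NS} gives $2\alpha_2\,|a_2|=D-C$, so equality holds and $f$ is admissible; moreover $f(0)=0$, and on $|z|=1$ one has $|f(e^{i\theta})|=\bigl|1-\tfrac{D-C}{2(1+2D-C)}e^{i\theta}\bigr|$, which decreases down to $1-\tfrac{D-C}{2(1+2D-C)}$, so the covering constant is actually attained: $d(f(0),\partial f(\mathbb{D}))=1-\tfrac{D-C}{2(1+2D-C)}$. Since for this $f$ we have $|z|+\sum_{m=2}^{\infty}(|a_m|+|b_m|)|z|^m=r+\tfrac{D-C}{2(1+2D-C)}r^2$, equality in the Bohr inequality occurs precisely at $r=r^*$.

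I do not anticipate a genuine obstacle: the whole argument is a transcription of Theorems~\ref{T-grththm} and~\ref{mainthm}. The two mildly delicate spots are (i) the notational clash in ``$\alpha_2$'' just mentioned, and (ii) confirming that the extremal quadratic really realizes the covering radius as its boundary distance (so that sharpness is an equality, not merely a bound), which the explicit computation of $|f(e^{i\theta})|$ above settles.
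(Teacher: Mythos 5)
Your proposal is correct and is essentially the paper's own argument: the paper likewise takes Dziok's characterization \eqref{Sc-NS}, sets $\gamma_m=m\alpha_m$, $\delta_m=m\beta_m$, $M=D-C$ (so that $\min\{\gamma_2,\delta_2\}=2\alpha_2=2(1+2D-C)$), and simply quotes Theorems~\ref{T-grththm}, \ref{covering} and \ref{mainthm}, with sharpness from the quadratic $f(z)=z-\frac{D-C}{2(1+2D-C)}z^2$ exactly as you do. One caveat: your final step, that the root of $r+\frac{D-C}{2\alpha_2}r^2=1-\frac{D-C}{2\alpha_2}$ ``rewrites into the displayed closed form,'' is not literally true; solving that quadratic gives
\[
r^*=\frac{-1+\sqrt{1+2\left(\frac{D-C}{\alpha_2}\right)\left(1-\frac{D-C}{2\alpha_2}\right)}}{\frac{D-C}{\alpha_2}},
\]
so the formula in the statement (with $1-\frac{D-C}{\alpha_2}$ under the radical) appears to be a typo in the paper — compare the analogous $R^{0}_{\mathcal{H}}(\beta)$ and $\mathcal{F}^0_{\mathcal{H}}(\lambda)$ theorems, where the corresponding half is retained — and your root, not the displayed expression, is the value consistent with the growth coefficient and the claimed extremal function.
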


Now we obtain the result for the class $\mathcal{F}^0_{\mathcal{H}}(\lambda)$. Using the condition 
\begin{equation}\label{F-coefCond}
\sum_{m=2}^{\infty}m(|a_m|+|b_m|)\leq \lambda,  
\end{equation}
convolution properties, radius of starlikeness and certain inclusion relationships were studied in~\cite{nagravi-2014} for the class $\mathcal{F}^0_{\mathcal{H}}(\lambda)$. Now with the same condition, we arrive at the following result using Theorem~\ref{covering} and~\ref{mainthm}:
\begin{theorem}
	If $f(z)=z+\sum_{m=2}^{\infty}a_m z^m+ \overline{\sum_{m=2}^{\infty}b_m {z}^m} \in \mathcal{F}^0_{\mathcal{H}}(\lambda)$ and also satisfy the condition $\sum_{m=2}^{\infty}m(|a_m|+|b_m|)\leq \lambda.$ Then
	\begin{equation*}
	r-\frac{\lambda}{2}r^2 \leq	|f(z)|\leq r+\frac{\lambda}{2}r^2, \quad (|z|=r)
	\end{equation*}
	and
	\begin{equation*}
	|z|+\sum_{m=2}^{\infty}(|a_m|+|b_m|) |z|^m \leq d(f(0), \partial{f(\mathbb{D})})
	\end{equation*}
	for $|z|\leq r^*$, where
	\begin{equation*}
	r^*=\frac{-1+\sqrt{1+2\lambda\left(1-\frac{\lambda}{2} \right)}}{\lambda}
	\end{equation*}
	and the Bohr radius $r^*$ for $\mathcal{F}^0_{\mathcal{H}}(\lambda)$ is obtained when $f(z)=z-\frac{\lambda}{2}z^2$.
\end{theorem}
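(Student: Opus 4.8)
The plan is to reduce this statement to a direct specialization of Theorem~\ref{T-grththm}, Corollary~\ref{covering}, and Theorem~\ref{mainthm}. Only the coefficient inequality $\sum_{m=2}^{\infty} m(|a_m|+|b_m|) \le \lambda$ will be used---membership in $\mathcal{F}^0_{\mathcal{H}}(\lambda)$ merely guarantees $f \in \mathcal{H}_0$---so I would read it as condition \eqref{main-gd-condition} with the choice $\gamma_m = \delta_m = m$ for all $m \ge 2$ and $M = \lambda$.

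First I would verify admissibility, i.e.\ that \eqref{gd-condition} holds for this choice: since $m \mapsto m$ is increasing, $\alpha_2 = \min\{\gamma_2,\delta_2\} = 2 > 0$ and $\gamma_m = \delta_m = m \ge 2 = \alpha_2$ for every $m \ge 2$; hence $f \in \mathcal{S}^0_{h+\bar{g}}(\lambda)$ with $M/\alpha_2 = \lambda/2$, and $\lambda \in (0,1]$ gives $1 - \lambda/2 \in [1/2,1)$ so the covering disk is nondegenerate. Then Theorem~\ref{T-grththm} yields at once $r - \frac{\lambda}{2}r^2 \le |f(z)| \le r + \frac{\lambda}{2}r^2$ on $|z|=r$, and Corollary~\ref{covering} gives $\{w : |w| \le 1 - \lambda/2\} \subset f(\mathbb{D})$, whence $d(f(0), \partial f(\mathbb{D})) \ge 1 - \lambda/2$. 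Substituting $M/\alpha_2 = \lambda/2$ into Theorem~\ref{mainthm}, the Bohr inequality holds for $|z|$ up to the unique positive root in $(0,1)$ of $r + \frac{\lambda}{2}r^2 = 1 - \frac{\lambda}{2}$, which the quadratic formula evaluates to
\[
r^* = \frac{-1 + \sqrt{1 + 4(\lambda/2)(1 - \lambda/2)}}{2(\lambda/2)} = \frac{-1 + \sqrt{1 + 2\lambda(1 - \lambda/2)}}{\lambda},
\]
the stated constant.

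For sharpness I would exhibit $f(z) = z - \frac{\lambda}{2}z^2$: it satisfies $\sum_{m=2}^{\infty} m(|a_m|+|b_m|) = 2\cdot\frac{\lambda}{2} = \lambda$ (and in fact lies in $\mathcal{F}^0_{\mathcal{H}}(\lambda)$, as $|f_z(z) - 1| = \lambda|z| < \lambda$ on $\mathbb{D}$), for it $d(f(0), \partial f(\mathbb{D})) = 1 - \lambda/2$, and at $|z| = r^*$ the left side of the Bohr inequality equals $r^* + \frac{\lambda}{2}(r^*)^2 = 1 - \frac{\lambda}{2}$; thus equality is attained and $r^*$ cannot be enlarged, exactly as in the proof of Theorem~\ref{mainthm}. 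There is essentially no obstacle here: the whole argument is the bookkeeping identification of $(\gamma_m,\delta_m,M) = (m,m,\lambda)$ as an admissible triple, the only mildly delicate point being that the minimum defining $\alpha_2$ is attained at the initial index $m=2$---immediate from monotonicity of $m \mapsto m$---after which the conclusion is a transcription of the already-proved general theorems with $M/\alpha_2 = \lambda/2$.
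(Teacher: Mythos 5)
Your proposal is correct and coincides with the paper's own route: the result is obtained exactly by identifying $(\gamma_m,\delta_m,M)=(m,m,\lambda)$ so that $\alpha_2=2$ and $M/\alpha_2=\lambda/2$, then transcribing Theorem~\ref{T-grththm}, Corollary~\ref{covering} and Theorem~\ref{mainthm}, with sharpness from $f(z)=z-\frac{\lambda}{2}z^2$. Your explicit check of condition \eqref{gd-condition} and of the quadratic formula is just the bookkeeping the paper leaves implicit.
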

If $g\equiv0$, then $\mathcal{F}^0_{\mathcal{H}}(\lambda)$ reduces to the Singh~\cite{singh-1977} class $\mathcal{F}(\lambda)$, which is contained in the MacGregor subclass $\mathcal{F}:=\{f\in\mathcal{A}: |f'(z)-1|<1 \}$ of close-to-convex functions. 
\begin{corollary}
	Bohr radius for the classes $\mathcal{F}(\lambda)$ and $\mathcal{F}^0_{\mathcal{H}}(\lambda)$ is same, whenever the condition \eqref{F-coefCond} holds.
\end{corollary}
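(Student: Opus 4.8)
The plan is to establish the equality of the two Bohr radii by a squeeze argument in which the common value is the constant
\[
r^* = \frac{-1 + \sqrt{1 + 2\lambda\left(1 - \frac{\lambda}{2}\right)}}{\lambda}
\]
produced by the theorem just proved for $\mathcal{F}^0_{\mathcal{H}}(\lambda)$. The decisive observation is that the extremal map witnessing sharpness there, namely $f(z) = z - \frac{\lambda}{2}z^2$, is purely analytic, so it already belongs to $\mathcal{F}(\lambda)$; thus passing to the harmonic extension cannot enlarge the worst case.

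First I would record the inclusion at the level of the admissible subfamilies. If $f = h + \bar g$ lies in $\mathcal{F}(\lambda)$ with $g \equiv 0$ and satisfies \eqref{F-coefCond}, then $|f_z(z) - 1| = |h'(z) - 1| < \lambda = \lambda - |f_{\bar z}(z)|$, so $f \in \mathcal{F}^0_{\mathcal{H}}(\lambda)$ and it continues to satisfy \eqref{F-coefCond}. Hence the Bohr inequality
\[
|z| + \sum_{m=2}^{\infty}(|a_m| + |b_m|)|z|^m \leq d(f(0), \partial f(\mathbb{D})),
\]
which is valid on $|z| \leq r^*$ for every admissible member of $\mathcal{F}^0_{\mathcal{H}}(\lambda)$, holds in particular for every admissible member of $\mathcal{F}(\lambda)$. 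This shows that the Bohr radius of $\mathcal{F}(\lambda)$ (subject to \eqref{F-coefCond}) is \emph{at least} $r^*$.

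For the reverse inequality I would verify sharpness within the analytic class. For $f(z) = z - \frac{\lambda}{2}z^2$ we have $f'(z) = 1 - \lambda z$, so $|f'(z) - 1| = \lambda|z| < \lambda$ on $\mathbb{D}$ and the coefficients meet \eqref{F-coefCond} with equality; thus $f \in \mathcal{F}(\lambda)$. Applying the Growth Theorem~\ref{T-grththm} and Covering Theorem~\ref{covering} with $\gamma_m = \delta_m = m$, $M = \lambda$, $\alpha_2 = 2$ gives $d(f(0), \partial f(\mathbb{D})) = 1 - \frac{\lambda}{2}$, while the left-hand side of the Bohr inequality reduces to $r + \frac{\lambda}{2}r^2$, which equals $1 - \frac{\lambda}{2}$ exactly at $r = r^*$ and strictly exceeds it for $r > r^*$. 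Hence the Bohr radius of $\mathcal{F}(\lambda)$ is \emph{at most} $r^*$, and combining the two bounds yields that it equals $r^*$, the same value as for $\mathcal{F}^0_{\mathcal{H}}(\lambda)$.

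I do not expect a genuine obstacle here: the entire content is the monotonicity of the Bohr radius under class inclusion together with the fact that the worst-case function is analytic. The only point demanding a little care is to keep all statements relative to the subclass cut out by \eqref{F-coefCond}, since that coefficient bound is an additional hypothesis rather than an automatic consequence of membership in $\mathcal{F}(\lambda)$ or in $\mathcal{F}^0_{\mathcal{H}}(\lambda)$.
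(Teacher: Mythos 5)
Your proposal is correct and matches the paper's (implicit) reasoning: the corollary is stated as an immediate consequence of the preceding theorem, resting exactly on the two facts you spell out, namely that every $f\in\mathcal{F}(\lambda)$ satisfying \eqref{F-coefCond} lies in $\mathcal{F}^0_{\mathcal{H}}(\lambda)$ with the same coefficient bound, and that the extremal function $f(z)=z-\frac{\lambda}{2}z^2$ is analytic and hence already in $\mathcal{F}(\lambda)$, so the sharpness transfers and both Bohr radii equal $r^*$. Your added care in treating \eqref{F-coefCond} as a hypothesis rather than a consequence of membership is appropriate and consistent with how the corollary is phrased.
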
	

The following two theorems are for the classes $k-\widetilde{ST}^{-}_{q}(\alpha)$ and $R^{0}_{\mathcal{H}}(\beta)$ respectively:
\begin{lemma}\cite{unif-q2018}\label{kSUcnessuf}
	Let $0\leq k<\infty$, $0<q<1$ and $0\leq \alpha<1$. Then $f\in k-\widetilde{ST}^{-}_{q}(\alpha)$ if and only if 
	\begin{equation*}
	\sum_{m=2}^{\infty}({[\widetilde{m}]_q}(k+1)- (k+\alpha) ) a_m \leq 1-\alpha.
	\end{equation*}
\end{lemma}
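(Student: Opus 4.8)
The plan is to use the standard reduction that is available for conic regions of the form $\Re w>k|w-1|+\alpha$. Write $f(z)=z-\sum_{m=2}^{\infty}a_mz^m$ with $a_m\geq0$ (negative coefficients), and recall that $\widetilde{D}_qz^m=[\widetilde{m}]_qz^{m-1}$, so that
\[
z(\widetilde{D}_qf)(z)=z-\sum_{m=2}^{\infty}[\widetilde{m}]_q a_m z^m,\qquad p(z)=\frac{z(\widetilde{D}_qf)(z)}{f(z)},\qquad p(0)=1.
\]
Since $[\widetilde{m}]_q=(q^m-q^{-m})/(q-q^{-1})$ is increasing in $m$ with $[\widetilde{2}]_q=q+q^{-1}\geq2$, each coefficient $(k+1)[\widetilde{m}]_q-(k+\alpha)$ in the claimed inequality is strictly positive; this sign information is what makes the argument run.

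For sufficiency I would first note that $\Re p(z)>k|p(z)-1|+\alpha$ on $\mathbb{D}$ is implied by the single inequality $(k+1)|p(z)-1|\leq 1-\alpha$, because then $\Re p(z)-\alpha=\Re(p(z)-1)+(1-\alpha)\geq-|p(z)-1|+(k+1)|p(z)-1|=k|p(z)-1|$, with strict inequality away from the trivial case. Next, for $|z|=r<1$ the triangle inequality gives
\[
|p(z)-1|=\frac{\bigl|\sum_{m=2}^{\infty}([\widetilde{m}]_q-1)a_mz^m\bigr|}{\bigl|z-\sum_{m=2}^{\infty}a_mz^m\bigr|}\leq\frac{\sum_{m=2}^{\infty}([\widetilde{m}]_q-1)a_mr^m}{\,r-\sum_{m=2}^{\infty}a_mr^m\,},
\]
and a short rearrangement shows that $(k+1)$ times the right-hand side is $\leq 1-\alpha$ exactly when $\sum_{m=2}^{\infty}\bigl((k+1)[\widetilde{m}]_q-(k+\alpha)\bigr)a_mr^{m-1}\leq 1-\alpha$, which is immediate from the hypothesis since $r^{m-1}<1$. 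Hence $f\in k-\widetilde{ST}^{-}_{q}(\alpha)$.

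For necessity I would restrict to the real segment $z=r\in(0,1)$. There $p(r)$ is real; since $f(r)=r-\sum_{m=2}^{\infty}a_mr^m>0$ (positive near $0$ and nonvanishing on $(0,1)$ because $p$ is analytic there) while the numerator of $p(r)-1$ is $\leq0$, one gets $p(r)\leq1$, so $|p(r)-1|=1-p(r)$. The defining inequality becomes $p(r)>k(1-p(r))+\alpha$, i.e. $(k+1)p(r)>k+\alpha$; multiplying through by the positive quantity $f(r)$ and rearranging yields $\sum_{m=2}^{\infty}\bigl((k+1)[\widetilde{m}]_q-(k+\alpha)\bigr)a_mr^{m-1}<1-\alpha$, and letting $r\to1^-$ with monotone convergence (all terms nonnegative) gives the stated inequality. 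The two points that need care are the positivity of $f(r)$ on $(0,1)$ — needed both to clear denominators and to identify $|p(r)-1|$ — and the interchange of the limit $r\to1^-$ with the infinite sum; both are routine once the fixed sign of the summands is observed, so I expect the only genuine subtlety to be phrasing the $f(r)>0$ step cleanly.
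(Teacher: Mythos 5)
The paper offers no proof of this lemma: it is imported verbatim from Altinkaya, Kanas and Yalcin \cite{unif-q2018}, so there is no in-paper argument to compare against. Your proof is the standard Silverman-type coefficient argument and it is essentially correct, and in substance it is the argument of the cited source: reducing the conic condition to $(k+1)|p(z)-1|\le 1-\alpha$, bounding $|p(z)-1|$ by the triangle inequality, rearranging to $\sum_{m\ge 2}\bigl((k+1)[\widetilde{m}]_q-(k+\alpha)\bigr)a_m r^{m-1}\le 1-\alpha$, and for necessity working along $z=r\in(0,1)$ where $p(r)\le 1$, clearing the positive denominator and letting $r\to 1^-$ with monotone convergence. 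One step you should make explicit: both the triangle-inequality bound and the ``short rearrangement'' equivalence require the denominator $r-\sum_{m\ge 2}a_m r^m$ to be positive, and in the sufficiency direction this has to be extracted from the hypothesis itself; it does follow, since $[\widetilde{m}]_q\ge[\widetilde{2}]_q=q+q^{-1}>2$ gives $(k+1)[\widetilde{m}]_q-(k+\alpha)>k+2-\alpha>1$, hence $\sum_{m\ge 2}a_m\le 1-\alpha<1$, but mere strict positivity of the coefficients (which is all you invoke) would not by itself justify clearing that denominator. With that line added, together with your separate treatment of the trivial case $f(z)=z$ to secure strictness, the argument is complete.
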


From Lemma~\ref{kSUcnessuf}, we see that choosing $\gamma_m={[\widetilde{m}]_q}(k+1)- (k+\alpha) $, $\delta_m=0$ and $M=1-\alpha$, condition in \eqref{gd-condition} holds, and therefore applying Theorem~\ref{covering} and~\ref{mainthm}, we get the following result: 

\begin{theorem}
	Let $f(z)=z-\sum_{m=2}^{\infty}a_mz^m\in k-\widetilde{ST}^{-}_{q}(\alpha)$. Then
	\begin{equation*}
	r-\frac{q(1-\alpha)}{(q^2+1)(k+1)-q(k+\alpha)}r^2 \leq	|f(z)|\leq r+\frac{q(1-\alpha)}{(q^2+1)(k+1)-q(k+\alpha)}r^2,
	\end{equation*}
	where $|z|=r$ and
	$$|z|+\sum_{m=2}^{\infty}a_m |z|^m \leq d(f(0), \partial{f(\mathbb{D})})$$
	for $|z|\leq r^*$, where
	$$r^*=\frac{-1+\sqrt{1+4\left(\frac{q(1-\alpha)}{(q^2+1)(k+1)-q(k+\alpha)}\right)\left(1-\frac{q(1-\alpha)}{(q^2+1)(k+1)-q(k+\alpha)} \right)}}{2\left(\frac{q(1-\alpha)}{(q^2+1)(k+1)-q(k+\alpha)}\right) }$$
	and the Bohr radius $r^*$ is obtained when $f(z)=
	z-\frac{q(1-\alpha)}{(q^2+1)(k+1)-q(k+\alpha)}z^2$.
\end{theorem}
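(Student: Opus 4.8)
The plan is to exhibit $k-\widetilde{ST}^{-}_{q}(\alpha)$ as a subfamily of $\mathcal{S}^0_{h+\bar g}(M)$ and then read off the conclusions from Corollary~\ref{covering} and Theorem~\ref{mainthm}. First I would invoke Lemma~\ref{kSUcnessuf}: a function $f(z)=z-\sum_{m=2}^{\infty}a_m z^m$ lies in $k-\widetilde{ST}^{-}_{q}(\alpha)$ exactly when $\sum_{m=2}^{\infty}\gamma_m a_m\le 1-\alpha$, where $\gamma_m:=[\widetilde{m}]_q(k+1)-(k+\alpha)$. Since $g\equiv 0$ for every member of this class, I set $\delta_m=0$ (the $\delta$-contribution being vacuous) and $M=1-\alpha$; then the defining inequality of $\mathcal{S}^0_{h+\bar g}(M)$ collapses precisely to the criterion of Lemma~\ref{kSUcnessuf}, and $\alpha_2$ should be read as $\gamma_2$.

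The only genuine verification is that \eqref{gd-condition} holds, i.e. $\gamma_m\ge\gamma_2>0$ for all $m\ge2$, which reduces to the monotonicity of the $q$-bracket. Writing $[\widetilde{m}]_q=(q^{-m}-q^{m})/(q^{-1}-q)$, for $0<q<1$ both the numerator and the denominator are positive and $q^{-m}-q^{m}$ is strictly increasing in $m$, so $m\mapsto[\widetilde{m}]_q$ is increasing and hence $\gamma_m\ge\gamma_2$. Positivity of $\gamma_2$ follows from $[\widetilde{2}]_q=q+q^{-1}>1$, which gives $\gamma_2\ge(k+1)-(k+\alpha)=1-\alpha>0$; more explicitly, $\gamma_2=q^{-1}\big((q^{2}+1)(k+1)-q(k+\alpha)\big)$.

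With \eqref{gd-condition} in hand I would substitute
\[
\frac{M}{\alpha_2}=\frac{1-\alpha}{\gamma_2}=\frac{q(1-\alpha)}{(q^{2}+1)(k+1)-q(k+\alpha)}
\]
into the growth estimate of Theorem~\ref{T-grththm}, the covering inclusion of Corollary~\ref{covering}, and the Bohr radius of Theorem~\ref{mainthm}; the last of these yields exactly the stated $r^{*}$. For sharpness, the extremal function $f(z)=z-\tfrac{M}{\alpha_2}z^{2}$ from Theorem~\ref{mainthm} satisfies $\sum_{m\ge2}\gamma_m a_m=\gamma_2\cdot\tfrac{M}{\alpha_2}=M=1-\alpha$, so it meets the criterion of Lemma~\ref{kSUcnessuf} with equality and hence belongs to $k-\widetilde{ST}^{-}_{q}(\alpha)$, while the proof of Theorem~\ref{mainthm} shows the Bohr sum equals $d(f(0),\partial f(\mathbb{D}))$ at $|z|=r^{*}$. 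The main (and essentially only) obstacle is the monotonicity of $[\widetilde{m}]_q$; everything else is a direct transcription through the already-established machinery, the one bookkeeping subtlety being that with $\delta_m=0$ and all $b_m=0$ the quantity $\min\{\gamma_2,\delta_2\}$ must be interpreted as $\gamma_2$ so that the hypotheses of the earlier results remain valid.
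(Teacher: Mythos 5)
Your proposal is correct and follows essentially the same route as the paper: it identifies $k-\widetilde{ST}^{-}_{q}(\alpha)$ as a subclass of $\mathcal{S}^0_{h+\bar{g}}(M)$ via Lemma~\ref{kSUcnessuf} with $\gamma_m=[\widetilde{m}]_q(k+1)-(k+\alpha)$, $\delta_m=0$, $M=1-\alpha$, and then reads off the growth bound, covering and Bohr radius from Theorem~\ref{T-grththm}, Corollary~\ref{covering} and Theorem~\ref{mainthm}. In fact you supply details the paper leaves implicit (monotonicity of $[\widetilde{m}]_q$, the computation $\gamma_2=\bigl((q^2+1)(k+1)-q(k+\alpha)\bigr)/q$, and the interpretation of $\alpha_2$ as $\gamma_2$ when $g\equiv0$), all of which are correct.
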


\begin{lemma}\cite{DPorwal-2010}\label{RH-sum}
	Let $f\in R^{0}_{\mathcal{H}}(\beta)$. Then the following inequality $$\sum_{m=2}^{\infty}(m(|a_m|+|b_m|))\leq \beta-1$$ is necessary and sufficient for the functions to be in $R^{0}_{\mathcal{H}}(\beta).$ 
\end{lemma}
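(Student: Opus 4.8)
The plan is to treat this as a routine coefficient characterisation in the spirit of \eqref{main-gd-condition}--\eqref{T-sum}: prove the sufficiency by the triangle inequality and the necessity by restricting the variable to the positive real axis and passing to the radial limit. The entire argument rests on the fact that every member of $R^{0}_{\mathcal{H}}(\beta)$ has non-negative Taylor coefficients, so no cancellation can occur in $h'+g'$; this is precisely what upgrades the usual ``if'' to an ``if and only if''.

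For the sufficiency I would start from
$h'(z)+g'(z)=1+\sum_{m=2}^{\infty}m(|a_m|+|b_m|)z^{m-1}$
and, assuming $\sum_{m=2}^{\infty}m(|a_m|+|b_m|)\le\beta-1$, estimate for $|z|=r<1$
\begin{equation*}
\Re\{h'(z)+g'(z)\}\le 1+\left|\sum_{m=2}^{\infty}m(|a_m|+|b_m|)z^{m-1}\right|\le 1+\sum_{m=2}^{\infty}m(|a_m|+|b_m|)r^{m-1}\le 1+\sum_{m=2}^{\infty}m(|a_m|+|b_m|)\le\beta,
\end{equation*}
so that $f\in R^{0}_{\mathcal{H}}(\beta)$. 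For the necessity I would take $z=r\in(0,1)$; since the coefficients $|a_m|,|b_m|$ are real and non-negative, $h'(r)+g'(r)$ is real, whence the membership condition gives $1+\sum_{m=2}^{N}m(|a_m|+|b_m|)r^{m-1}\le\Re\{h'(r)+g'(r)\}\le\beta$ for every partial sum and every $r\in(0,1)$. Letting $r\to1^{-}$ (legitimate term by term, or via Abel's theorem, because the terms are non-negative) yields $\sum_{m=2}^{\infty}m(|a_m|+|b_m|)\le\beta-1$.

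I do not anticipate a real obstacle; the only points needing a word of care are in the necessity direction. One must observe that, for a function with non-negative coefficients, $\sup_{|z|=r}\Re\{h'(z)+g'(z)\}$ is attained at $z=r$ (again $\Re(z^{m-1})\le r^{m-1}$), so restricting to the positive real axis loses nothing, and one must justify the passage to the limit $r\to1^{-}$, which monotonicity of the partial sums in $r$ makes immediate. Finally the bound is sharp, with equality for $f(z)=z+\frac{\beta-1}{2}z^{2}$ (equivalently $f(z)=z+\frac{\beta-1}{2}\bar z^{2}$); this also identifies $\alpha_2=2$ and $M=\beta-1$ when realising $R^{0}_{\mathcal{H}}(\beta)$ as a subclass of $\mathcal{S}^0_{h+\bar{g}}(M)$, so that Theorems~\ref{covering} and~\ref{mainthm} then apply.
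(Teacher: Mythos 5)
Your proposal is correct: the sufficiency via the triangle inequality and the necessity via restriction to $z=r\in(0,1)$ with non-negative coefficients and the limit $r\to1^{-}$ is exactly the standard Silverman-type argument, which is how this characterisation is proved in the cited source \cite{DPorwal-2010}; the present paper itself states the lemma without proof, simply quoting that reference. Only a small remark: the observation about $\sup_{|z|=r}\Re\{h'(z)+g'(z)\}$ being attained at $z=r$ is superfluous, since the defining inequality is assumed at every point of $\mathbb{D}$ and hence in particular at $z=r$.
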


From Lemma~\ref{RH-sum}, we see that choosing $\gamma_m=\delta_m=m$ and $M=\beta-1$, condition in \eqref{gd-condition} holds, and therefore applying Theorem~\ref{covering} and~\ref{mainthm}, we get the following result: 
\begin{theorem}
	Let $f(z)=z+\sum_{m=2}^{\infty}a_m z^m+ \overline{\sum_{m=2}^{\infty}b_m {z}^m}\in R^{0}_{\mathcal{H}}(\beta) $.  Then
	\begin{equation*}
	r-\frac{\beta-1}{2}r^2 \leq |f(z)|\leq r+\frac{\beta-1}{2}r^2, \quad (|z|=r)
	\end{equation*}
	and
	\begin{equation*}
	|z|+\sum_{m=2}^{\infty}(|a_m|+|b_m|) |z|^m \leq d(f(0), \partial{f(\mathbb{D})})
	\end{equation*}
	for $|z|\leq r^*$, where
	\begin{equation*}
	r^*= \frac{-1+\sqrt{1+2(\beta-1)\left(1-\frac{\beta-1}{2}\right)}}{\beta-1}
	\end{equation*}
	and the Bohr radius $r^*$ for the class $R^{0}_{\mathcal{H}}(\beta) $ is obtained when $f(z)=
	z-\frac{\beta-1}{2}z^2$.
\end{theorem}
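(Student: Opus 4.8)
The plan is to deduce this theorem as a direct specialization of the general machinery: identify $R^{0}_{\mathcal{H}}(\beta)$ as a subclass of $\mathcal{S}^0_{h+\bar{g}}(M)$ for a suitable choice of the weight sequences $\gamma_m,\delta_m$ and of $M$, and then quote the Growth Theorem~\ref{T-grththm}, the Covering Theorem~\ref{covering}, and the Bohr radius Theorem~\ref{mainthm}.

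First I would invoke Lemma~\ref{RH-sum}, which says that $f\in R^{0}_{\mathcal{H}}(\beta)$ if and only if $\sum_{m=2}^{\infty} m(|a_m|+|b_m|)\leq \beta-1$. Matching this against the defining inequality of $\mathcal{S}^0_{h+\bar{g}}(M)$, the natural choice is $\gamma_m=\delta_m=m$ for all $m\geq2$ and $M=\beta-1$. I would then check hypothesis~\eqref{gd-condition}: here $\alpha_2=\min\{\gamma_2,\delta_2\}=2>0$, and since $\gamma_m=\delta_m=m\geq 2=\alpha_2$ for every $m\geq2$, the condition holds; also $M=\beta-1>0$ because $\beta>1$. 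Hence $R^{0}_{\mathcal{H}}(\beta)\subseteq \mathcal{S}^0_{h+\bar{g}}(\beta-1)$, with the relevant ratio $M/\alpha_2=(\beta-1)/2$.

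With this inclusion, the claims follow at once. The growth bounds come from Theorem~\ref{T-grththm} upon substituting $M/\alpha_2=(\beta-1)/2$; the covering statement from Corollary~\ref{covering}; and the Bohr inequality together with the radius from Theorem~\ref{mainthm}, after the algebraic rewriting
\[
r^*=\frac{-1+\sqrt{1+4\left(\tfrac{\beta-1}{2}\right)\left(1-\tfrac{\beta-1}{2}\right)}}{2\left(\tfrac{\beta-1}{2}\right)}=\frac{-1+\sqrt{1+2(\beta-1)\left(1-\tfrac{\beta-1}{2}\right)}}{\beta-1},
\]
which is exactly the stated formula (equivalently, $r^*$ is the unique root in $(0,1)$ of $r+\tfrac{\beta-1}{2}r^2=1-\tfrac{\beta-1}{2}$). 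For sharpness I would exhibit $f(z)=z-\tfrac{\beta-1}{2}z^2$ and verify, via Lemma~\ref{RH-sum}, that it lies in $R^{0}_{\mathcal{H}}(\beta)$, since $\sum_{m=2}^{\infty} m(|a_m|+|b_m|)=2\cdot\tfrac{\beta-1}{2}=\beta-1$; then the sharpness part of Theorem~\ref{mainthm} gives equality at $|z|=r^*$.

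There is essentially no obstacle: the argument is a verification that the coefficient condition of Lemma~\ref{RH-sum} fits the template of $\mathcal{S}^0_{h+\bar{g}}(M)$. The only points needing a little care are (i) confirming $\gamma_m,\delta_m\geq\alpha_2$ for the chosen sequences, which is immediate from $m\geq2$; and (ii) the simplification of the radical, together with the implicit requirement that the covering radius $1-\tfrac{\beta-1}{2}$ be positive (i.e. $\beta<3$), which is what guarantees $r^*\in(0,1)$ and which holds throughout the range of $\beta$ relevant to the Dixit--Porwal class.
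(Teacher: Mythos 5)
Your proposal is correct and follows essentially the same route as the paper: invoke Lemma~\ref{RH-sum}, take $\gamma_m=\delta_m=m$ and $M=\beta-1$ so that condition~\eqref{gd-condition} holds with $\alpha_2=2$, and then apply Theorems~\ref{T-grththm}, \ref{covering} and~\ref{mainthm} with $M/\alpha_2=(\beta-1)/2$, simplifying the radical to the stated $r^*$. Your added remarks on sharpness via $f(z)=z-\frac{\beta-1}{2}z^2$ and on the range of $\beta$ (which in the Dixit--Porwal setting satisfies $\beta\leq2$, so $1-\frac{\beta-1}{2}>0$) are consistent with the paper's argument.
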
 
\begin{corollary}
	Bohr radius for the classes $R(\beta)$ and $R^{0}_{\mathcal{H}}(\beta)$ is same.
\end{corollary}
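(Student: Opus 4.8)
The plan is to exploit the fact that $R(\beta)$ is precisely the analytic (that is, $g\equiv0$) subclass of $R^0_{\mathcal{H}}(\beta)$, so that every estimate behind the preceding theorem specializes to $R(\beta)$ without change. First I would recall that $R(\beta)$ is characterized (by Uralegaddi, or equivalently by setting $g\equiv0$ in Lemma~\ref{RH-sum}) by the condition $\sum_{m=2}^{\infty}m|a_m|\le\beta-1$. Taking $\gamma_m=\delta_m=m$, $\alpha_2=2$ and $M=\beta-1$, the growth bound $r-\tfrac{\beta-1}{2}r^2\le|f(z)|\le r+\tfrac{\beta-1}{2}r^2$, the covering inclusion of Corollary~\ref{covering}, and the majorant $|z|+\sum_{m\ge2}|a_m||z|^m\le r+\tfrac{\beta-1}{2}r^2$ all hold verbatim for $f\in R(\beta)$, and in particular $d(f(0),\partial f(\mathbb{D}))\ge 1-\tfrac{\beta-1}{2}$.

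Next I would carry out the two-sided comparison. Since $R(\beta)\subset R^0_{\mathcal{H}}(\beta)$ under the identification $f\leftrightarrow f+\overline{0}$, the Bohr inequality of Theorem~\ref{mainthm} is valid on $R(\beta)$ for all $|z|\le r^*$, so the Bohr radius of $R(\beta)$ is at least the value $r^*$ produced by that theorem with the above data (it equals the unique root in $(0,1)$ of $r+\tfrac{\beta-1}{2}r^2=1-\tfrac{\beta-1}{2}$). For the reverse inequality I would use the extremal polynomial $f(z)=z-\tfrac{\beta-1}{2}z^2$: it is analytic and satisfies $\sum_{m\ge2}m|a_m|=\beta-1$, hence lies in $R(\beta)$, and at $|z|=r^*$ it already forces $|z|+\sum_{m\ge2}|a_m||z|^m=r^*+\tfrac{\beta-1}{2}(r^*)^2=1-\tfrac{\beta-1}{2}=d(f(0),\partial f(\mathbb{D}))$, so no larger radius can work for $R(\beta)$. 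The two directions together give equality of the two Bohr radii.

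The one point that genuinely needs care --- and essentially the only thing beyond a mechanical specialization --- is that the covering estimate $d(f(0),\partial f(\mathbb{D}))\ge 1-\tfrac{\beta-1}{2}$ borrowed from the larger class is still the sharp distance for $R(\beta)$, since passing to a subclass could a priori only enlarge the Bohr radius. I would resolve this by observing that the extremal polynomial above already attains equality in Corollary~\ref{covering} while staying inside $R(\beta)$, which pins the relevant distance down to exactly $1-\tfrac{\beta-1}{2}$ and makes the radius non-improvable. I anticipate no real obstacle here; the very same scheme also accounts for the earlier corollary equating the Bohr radii of $\mathcal{F}(\lambda)$ and $\mathcal{F}^0_{\mathcal{H}}(\lambda)$.
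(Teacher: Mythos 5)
Your argument is correct and is exactly the reasoning implicit in the paper: since $R(\beta)\subset R^{0}_{\mathcal{H}}(\beta)$ the Bohr inequality of the preceding theorem holds on $R(\beta)$ up to $r^*$, while the extremal function $f(z)=z-\tfrac{\beta-1}{2}z^2$ is itself analytic, lies in $R(\beta)$, and attains equality at $r^*$ with $d(f(0),\partial f(\mathbb{D}))=1-\tfrac{\beta-1}{2}$, so neither class can have a larger radius. The paper leaves this specialization unstated, and your write-up just makes the two-sided comparison explicit; no gap.
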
	

Silverman considered the class with negative coefficients as follows:
\begin{equation*}
\mathcal{T}:= \left\{f\in \mathcal{S}: f(z)=z-\sum_{m=2}^{\infty}a_mz^m, a_m\geq0  \right \}.
\end{equation*}
Using this recently, Ali et al.~\cite{jain2019} considered the following general class defined as:
\begin{equation*}
\mathcal{T}(\alpha):= \left\{f\in \mathcal{T}: \sum_{m=2}^{\infty}g_ma_m\leq 1-\alpha \right \},
\end{equation*}
where $g_m\geq g_2>0$ and $\alpha<1$. Note that if we choose in \eqref{gd-condition}, $\gamma_m=g_m$, $\delta_m=0$ and $M=1-\alpha$, then the class $\mathcal{S}^0_{h+\bar{g}}(M)$ contains $\mathcal{T}(\alpha)$, which satisfies the required conditions. Thus, using Theorem~\ref{T-grththm} and~\ref{mainthm}, we have the following result  obtained in \cite{jain2019}:
\begin{theorem} \cite{jain2019}\label{jainthm}
	Let $f(z)=z-\sum_{m=2}^{\infty}a_mz^m \in \mathcal{T}(\alpha)$.  Then
	\begin{equation*}
	r-\frac{1-\alpha}{\gamma_2}r^2 \leq |f(z)|\leq r+\frac{1-\alpha}{\gamma_2}r^2, \quad (|z|=r)
	\end{equation*}
	and
	\begin{equation*}
	|z|+\sum_{m=2}^{\infty}|a_m| |z|^m \leq d(f(0), \partial{f(\mathbb{D})})
	\end{equation*}
	for $|z|\leq r^*$, where
	\begin{equation*}
	r^*=\frac{2(\gamma_2+\alpha-1)}{\gamma_2+\sqrt{\gamma^2_2+4\gamma_2(1-\alpha)-4(1-\alpha)^2}}
	\end{equation*}
	and the Bohr radius $r^*$ is obtained by the function $f(z)=
	z-\frac{1-\alpha}{\gamma_2}z^2$.
\end{theorem}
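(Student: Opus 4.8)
The plan is to view $\mathcal{T}(\alpha)$ as a particular instance of the class $\mathcal{S}^0_{h+\bar g}(M)$ and then read off both conclusions directly from Theorems~\ref{T-grththm} and~\ref{mainthm}, the only genuine work being to rewrite the resulting Bohr radius in the form displayed in the statement.

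First I would unwind the definition. A member $f(z)=z-\sum_{m=2}^{\infty}a_m z^m$ of $\mathcal{T}(\alpha)$ has $g\equiv 0$, so $b_m=0$ for all $m\ge 2$, and its defining inequality $\sum_{m=2}^{\infty}g_m a_m\le 1-\alpha$ is exactly \eqref{main-gd-condition} once we set $\gamma_m=g_m$, $\delta_m\equiv 0$, $M=1-\alpha$, and take $\alpha_2=\gamma_2=g_2$; the hypothesis $g_m\ge g_2>0$ then secures \eqref{gd-condition} on the $\gamma$-side, while the vanishing of every $b_m$ renders the (degenerate) choice $\delta_m=0$ immaterial to all the estimates. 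Hence $\mathcal{T}(\alpha)\subseteq \mathcal{S}^0_{h+\bar g}(1-\alpha)$ with $M/\alpha_2=(1-\alpha)/\gamma_2$, and Theorem~\ref{T-grththm} immediately yields $r-\frac{1-\alpha}{\gamma_2}r^2\le|f(z)|\le r+\frac{1-\alpha}{\gamma_2}r^2$, while Theorem~\ref{mainthm} gives the Bohr inequality for $|z|\le \widetilde r$, where $\widetilde r=\frac{-1+\sqrt{1+4(M/\alpha_2)(1-M/\alpha_2)}}{2(M/\alpha_2)}$ with $M/\alpha_2=(1-\alpha)/\gamma_2$.

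It then remains to identify $\widetilde r$ with the stated $r^*$, which is a routine algebraic simplification. Setting $A:=\sqrt{\gamma_2^2+4\gamma_2(1-\alpha)-4(1-\alpha)^2}$, clearing the common denominator inside the radical shows $1+4(M/\alpha_2)(1-M/\alpha_2)=A^2/\gamma_2^2$, so $\widetilde r=(A-\gamma_2)/\bigl(2(1-\alpha)\bigr)$; multiplying numerator and denominator by $A+\gamma_2$ and using $A^2-\gamma_2^2=4(1-\alpha)(\gamma_2+\alpha-1)$ collapses this to $\widetilde r=\frac{2(\gamma_2+\alpha-1)}{\gamma_2+A}$, which is precisely $r^*$. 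Sharpness is inherited verbatim from Theorem~\ref{mainthm}: the extremal map $f(z)=z-\frac{M}{\alpha_2}z^2=z-\frac{1-\alpha}{\gamma_2}z^2$ satisfies $g_2\cdot\frac{1-\alpha}{\gamma_2}=1-\alpha$ with equality, hence lies in $\mathcal{T}(\alpha)$, and for it the Bohr sum equals $d(f(0),\partial f(\mathbb{D}))$ exactly at $|z|=r^*$. I do not anticipate a real obstacle; the single point deserving a line of care is the degenerate role of the $\delta_m$'s, which I would dispatch by observing that every term they control is identically zero for functions in $\mathcal{T}(\alpha)$, so passing to the hypotheses of Theorem~\ref{mainthm} is legitimate.
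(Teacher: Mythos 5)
Your proposal is correct and follows essentially the same route as the paper: the paper likewise embeds $\mathcal{T}(\alpha)$ into $\mathcal{S}^0_{h+\bar g}(M)$ via $\gamma_m=g_m$, $\delta_m=0$, $M=1-\alpha$ and invokes Theorems~\ref{T-grththm} and~\ref{mainthm}. Your extra care with the degenerate $\delta_m$'s (harmless since all $b_m=0$) and the explicit algebra identifying $\frac{-1+\sqrt{1+4(M/\alpha_2)(1-M/\alpha_2)}}{2(M/\alpha_2)}$ with the stated $r^*$ only fills in details the paper leaves implicit.
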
 

Choosing $\gamma_m=m-\alpha$ and $\gamma_m=m(m-\alpha)$ in \eqref{gd-condition}, the class $\mathcal{S}^0_{h+\bar{g}}(M)$ contains the Silverman classes $\mathcal{TST}(\alpha):=\mathcal{T}\cap \mathcal{ST}(\alpha) $ and $\mathcal{TCV}(\alpha):=\mathcal{T}\cap \mathcal{CV}(\alpha)$ of starlike and convex functions with negative coefficients respectively and Theorem~\ref{mainthm} provides the Bohr radius as obtained in \cite[Theorem~2.3]{jain2019} and \cite[Theorem~2.4]{jain2019}.

Ali et al.~\cite{jain2019} considered the class $\mathcal{TF}_\alpha:=\mathcal{T}\cap \mathcal{F}_{\alpha}$, $0\leq\alpha\leq1$, where $\mathcal{F}_{\alpha}$ is the class of close to convex functions and showed that if $f\in \mathcal{TF}_\alpha$, then $\sum_{m=2}^{\infty}m(2n+\alpha)a_m\leq 2+\alpha$. Thus choosing $\gamma_m=m(2n+\alpha)$ and $\delta_m=0$ and $M=2+\alpha$ in \eqref{gd-condition}, the class $\mathcal{TF}_\alpha$ satisfies all conditions of $\mathcal{S}^0_{h+\bar{g}}(M)$, and Theorem~\ref{mainthm} (or Theorem~\ref{jainthm}) reduces to \cite[Corollary~2.6]{jain2019}.

For $\alpha>1$, Owa and Nishiwaki~\cite{OwaNishi-2002} considered the classes of analytic functions $\mathcal{M}(\alpha):=\{f\in \mathcal{A}: \Re(zf'(z)/f(z))<\alpha\}$ and $\mathcal{N}(\alpha):=\{f\in \mathcal{A}: 1+\Re(zf''(z)/f'(z))<\alpha\}$. They showed that the following conditions:
\begin{equation*}
\sum_{n=2}^{\infty}((m-\mu)+|m+\mu-2\alpha|)|a_m|\leq 2(\alpha-1)
\end{equation*} 
and 
\begin{equation*}
\sum_{n=2}^{\infty}m(m-\mu+1+|m+\mu-2\alpha|)|a_m|\leq 2(\alpha-1),
\end{equation*}
where $0\leq\mu\leq1$ are sufficient for the function $f(z)=z+\sum_{m=2}^{\infty}a_mz^m$ to be in $\mathcal{M}(\alpha)$ and $\mathcal{N}(\alpha)$, respectively.
It is easy to see that the above two conditions also become necessary for the classes $\mathcal{TM}(\alpha):=\mathcal{T}\cap \mathcal{M}(\alpha)$ and $\mathcal{TN}(\alpha):=\mathcal{T}\cap \mathcal{N}(\alpha)$, respectively. Thus choosing $\gamma_m=(m-\mu)+|m+\mu-2\alpha|$, $\gamma_m= m(m-\mu+1+|m+\mu-2\alpha|)$ with $\delta_m=0$ and $M=2(\alpha-1)$, from Theorem~\ref{covering} and~\ref{mainthm}, we obtain the following result respectively:

\begin{theorem}
	Let $f(z)=z-\sum_{m=2}^{\infty}a_mz^m \in \mathcal{TM}(\alpha)$ (or $\mathcal{TN}(\alpha)$).  Then
	\begin{equation*}
	r-\frac{2(\alpha-1)}{\gamma_2}r^2 \leq |f(z)|\leq r+\frac{2(\alpha-1)}{\gamma_2}r^2, \quad (|z|=r),
	\end{equation*}
	where $\gamma_2:=(2-\mu)+|\mu-2(1-\alpha)|$ (or $2(3-\mu+|\mu-2(1-\alpha)|)$) and
	\begin{equation*}
	|z|+\sum_{m=2}^{\infty}|a_m| |z|^m \leq d(f(0), \partial{f(\mathbb{D})})
	\end{equation*}
	for $|z|\leq r^*$, where
	\begin{equation*}
	r^*=\frac{-1+\sqrt{1+4\left(\frac{2(\alpha-1)}{\gamma_2} \right) \left(1-\frac{2(\alpha-1)}{\gamma_2} \right)}}{2\left(\frac{2(\alpha-1)}{\gamma_2} \right)} .
	\end{equation*}
	The radius $r^*$, achieved for the function $f(z)=
	z-\frac{2(\alpha-1)}{\gamma_2}z^2$ is the Bohr radius for the class $\mathcal{TM}(\alpha)$ (or $\mathcal{TN}(\alpha)$).
\end{theorem}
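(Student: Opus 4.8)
The plan is to recognize $\mathcal{TM}(\alpha)$ and $\mathcal{TN}(\alpha)$ as instances of $\mathcal{S}^0_{h+\bar g}(M)$ and then quote the Growth Theorem~\ref{T-grththm}, the Covering Theorem~\ref{covering} and Theorem~\ref{mainthm}. First I would use the fact, recorded just above, that the Owa--Nishiwaki coefficient inequalities are not only sufficient for $\mathcal{M}(\alpha)$ and $\mathcal{N}(\alpha)$ but, for functions $f(z)=z-\sum_{m=2}^{\infty}a_mz^m$ with $a_m\geq 0$, are necessary and sufficient for membership in $\mathcal{TM}(\alpha)$ and $\mathcal{TN}(\alpha)$ respectively. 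Here $g\equiv 0$, so $b_m=0$ for every $m$, and the two inequalities read $\sum_{m\geq2}\gamma_m a_m\leq M$ with $M=2(\alpha-1)$ and $\gamma_m=(m-\mu)+|m+\mu-2\alpha|$ (resp.\ $\gamma_m=m(m-\mu+1+|m+\mu-2\alpha|)$).

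The only point requiring care is the verification of \eqref{gd-condition}, i.e.\ that $\gamma_m\geq\gamma_2>0$ for all $m\geq 2$, after which one takes $\delta_m=0$ and $\alpha_2=\gamma_2$ (the $\delta$'s play no role since $g\equiv 0$). For $\mathcal{TM}(\alpha)$ I would split on the sign of $m+\mu-2\alpha$: on $\{m:\,m+\mu-2\alpha\leq 0\}$ the expression reduces to the constant $2(\alpha-\mu)$, while on $\{m:\,m+\mu-2\alpha\geq 0\}$ it equals $2(m-\alpha)$, which is increasing; the two pieces agree at the transition $m=2\alpha-\mu$, so $\gamma_m$ is nondecreasing, and it is positive because $\alpha>1\geq\mu$. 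For $\mathcal{TN}(\alpha)$ one has $\gamma_m=m\bigl(1+(m-\mu)+|m+\mu-2\alpha|\bigr)$, a product of two positive nondecreasing factors, hence nondecreasing as well. Consequently \eqref{T-sum} holds with $M/\alpha_2=2(\alpha-1)/\gamma_2$, where $\gamma_2$ is the $m=2$ value displayed in the statement.

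The remainder is a direct specialization, identical in structure to the earlier applications in this section. Theorem~\ref{T-grththm} gives the stated growth bounds $r-\tfrac{2(\alpha-1)}{\gamma_2}r^2\leq|f(z)|\leq r+\tfrac{2(\alpha-1)}{\gamma_2}r^2$; letting $r\to 1^-$, the Covering Theorem~\ref{covering} yields $\{w:|w|\leq 1-\tfrac{2(\alpha-1)}{\gamma_2}\}\subset f(\mathbb D)$, hence $d(f(0),\partial f(\mathbb D))\geq 1-\tfrac{2(\alpha-1)}{\gamma_2}$; and Theorem~\ref{mainthm} then gives the Bohr inequality for $|z|\leq r^*$, with $r^*$ the unique root in $(0,1)$ of $r+\tfrac{2(\alpha-1)}{\gamma_2}r^2=1-\tfrac{2(\alpha-1)}{\gamma_2}$, which solves to the displayed quadratic formula. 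For sharpness I would check that $f(z)=z-\tfrac{2(\alpha-1)}{\gamma_2}z^2$ belongs to $\mathcal{TM}(\alpha)$ (resp.\ $\mathcal{TN}(\alpha)$): its single nonzero coefficient $a_2=\tfrac{2(\alpha-1)}{\gamma_2}$ satisfies $\gamma_2a_2=2(\alpha-1)=M$, meeting the coefficient bound with equality, so by the sharpness discussion in the proof of Theorem~\ref{mainthm} equality holds throughout at $|z|=r^*$.

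I expect the monotonicity of $\gamma_m$ — with its nested absolute value $|m+\mu-2\alpha|$ — to be the only real obstacle, and it dissolves once the two sign regimes are separated as above. Everything else is bookkeeping: substituting $M=2(\alpha-1)$ and $\alpha_2=\gamma_2$ into the already-proved Theorems~\ref{covering} and~\ref{mainthm}, and noting (as in the treatments of $\mathcal{T}(\alpha)$ and $k-\widetilde{ST}^{-}_{q}(\alpha)$) that the vanishing of $\delta_m$ simply corresponds to restricting to the analytic case $g\equiv 0$, where $\alpha_2$ is read off as $\gamma_2$.
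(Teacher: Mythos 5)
Your proposal is correct and follows essentially the same route as the paper: the theorem is obtained there exactly by noting that the Owa--Nishiwaki coefficient conditions become necessary and sufficient for $\mathcal{TM}(\alpha)$ and $\mathcal{TN}(\alpha)$, choosing $\gamma_m=(m-\mu)+|m+\mu-2\alpha|$ (resp.\ $\gamma_m=m(m-\mu+1+|m+\mu-2\alpha|)$), $\delta_m=0$, $M=2(\alpha-1)$, and invoking Theorems~\ref{covering} and~\ref{mainthm}. Your explicit check that $\gamma_m\geq\gamma_2>0$ via the two sign regimes of $m+\mu-2\alpha$ is a detail the paper leaves implicit, and your handling of $\delta_m$ and of sharpness matches the paper's treatment.
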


\section{Bohr-radius for the class $W_{\mathcal{H}}^0(\mu,\rho)$}
Let us recall the subclass of close-to-convex harmonic mappings introduced by Rajbala and Prajapat~\cite{RajPRAJ}: 
\begin{equation*}
W_{\mathcal{H}}^0(\mu,\rho):= \{f=h+\bar{g} \in \mathcal{H}_0 : \Re(h'(z)+\mu zh''(z)-\rho)> |g'(z)+\mu zg''(z)|\},
\end{equation*}
where $ \mu\geq0$ and $0\leq \rho<1$.

\begin{lemma}\cite{RajPRAJ}\label{Wbound}
	let $f=h+\bar{g}\in   W_{\mathcal{H}}^0(\mu,\rho)$. Then for $m\geq2$ the following sharp inequality holds:
	\begin{equation*}
	|a_m|+|b_m|\leq \frac{2(1-\rho)}{m(1+\mu(m-1))}.
	\end{equation*}
\end{lemma}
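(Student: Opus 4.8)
The plan is to extract the coefficient bound from the defining condition on $W_{\mathcal{H}}^0(\mu,\rho)$ by a standard argument for classes defined via a real-part inequality, and then verify sharpness with an explicit extremal function. First I would set $F(z) := h'(z) + \mu z h''(z)$ and $G(z) := g'(z) + \mu z g''(z)$, so that the membership condition reads $\Re(F(z) - \rho) > |G(z)|$ for all $z \in \mathbb{D}$. Writing $h(z) = z + \sum_{m=2}^\infty a_m z^m$ and $g(z) = \sum_{m=2}^\infty b_m z^m$, a direct computation gives $F(z) = 1 + \sum_{m=2}^\infty m(1+\mu(m-1)) a_m z^{m-1}$ and $G(z) = \sum_{m=2}^\infty m(1+\mu(m-1)) b_m z^{m-1}$. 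The key structural fact is that the condition forces, in particular, $\Re F(z) > \rho + |G(z)| \ge \rho$, so $p(z) := (F(z) - \rho)/(1-\rho)$ is an analytic function with $p(0) = 1$ and $\Re p > 0$ on $\mathbb{D}$; hence by the Carathéodory lemma its Taylor coefficients are bounded in modulus by $2$. Thus $m(1+\mu(m-1))|a_m| \le 2(1-\rho)$ for each $m \ge 2$.

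Next I would handle the $b_m$ term. Since $\Re(F(z)-\rho) > |G(z)|$ holds throughout $\mathbb{D}$, one can combine $F$ and $G$ into a single subordination-type statement: the analytic function $q(z) := (F(z) - \rho + G(z) \cdot e^{i\theta})/(1-\rho)$ has positive real part for every real $\theta$ (choosing the phase $\theta$ appropriately, or more cleanly, $F(z) - \rho \pm G(z)$ both have positive real part by taking $\theta = 0, \pi$, but to capture a coefficient of $G$ one rotates). Concretely, for fixed $m$ pick a unimodular constant $\varepsilon$ so that $\varepsilon\, m(1+\mu(m-1)) b_m = |m(1+\mu(m-1)) b_m|$; then the $(m-1)$-st coefficient of $(F(z) - \rho + \varepsilon\, G(z))/(1-\rho)$ equals $(m(1+\mu(m-1))/(1-\rho))(a_m \cdot (\text{phase}) + |b_m|)$, and applying Carathéodory again to this positive-real-part function yields, after taking real parts in the right direction, the combined estimate
\begin{equation*}
m(1+\mu(m-1))\bigl(|a_m| + |b_m|\bigr) \le 2(1-\rho),
\end{equation*}
which is exactly the claimed inequality $|a_m| + |b_m| \le \dfrac{2(1-\rho)}{m(1+\mu(m-1))}$.

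For sharpness I would exhibit, for each fixed $m \ge 2$, a function attaining equality. The natural candidate is obtained by integrating the extremal Carathéodory function: choose $h_0, g_0$ so that $h_0'(z) + \mu z h_0''(z) = \rho + (1-\rho)\dfrac{1+z^{m-1}}{1-z^{m-1}}$ up to normalization (adjusting so $h_0(z) = z + \cdots$) and $g_0 \equiv 0$, or more symmetrically distribute the mass between $h_0$ and $g_0$; solving this linear ODE term-by-term produces an $f_0 = h_0 + \overline{g_0} \in W_{\mathcal{H}}^0(\mu,\rho)$ with $|a_m| + |b_m| = \dfrac{2(1-\rho)}{m(1+\mu(m-1))}$. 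I expect the main obstacle to be the bookkeeping in the second step: one must argue carefully that the single inequality $\Re(F-\rho) > |G|$ really does license applying the Carathéodory bound to the \emph{combined} function $F - \rho + \varepsilon G$ for an arbitrary unimodular $\varepsilon$ (this is the point where "$\Re a > |b|$ for all $z$" is upgraded to "$\Re(a + \varepsilon b) > 0$ for all $z$ and all $|\varepsilon|=1$", which is immediate since $\Re(\varepsilon b) \ge -|b|$), and then to isolate the correct real combination of coefficients so that the $a_m$ and $b_m$ contributions add with the same sign rather than partially cancel. Everything else is routine power-series manipulation.
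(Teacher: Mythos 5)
The paper does not prove this lemma at all: it is quoted verbatim from Rajbala and Prajapat \cite{RajPRAJ}, so there is no internal proof to compare against, and a self-contained derivation like yours is welcome. Your argument is the standard one and is correct in outline: from $\Re(F(z)-\rho)>|G(z)|$ you get $\Re\bigl(F(z)-\rho+\varepsilon G(z)\bigr)>0$ for every unimodular $\varepsilon$, the function $F-\rho+\varepsilon G$ equals $1-\rho$ at the origin, and Carath\'eodory's bound applied to $p_\varepsilon=(F-\rho+\varepsilon G)/(1-\rho)$ gives $m(1+\mu(m-1))\,|a_m+\varepsilon b_m|\le 2(1-\rho)$ for every $|\varepsilon|=1$. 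The one slip is your choice of phase: taking $\varepsilon$ so that $\varepsilon b_m=|b_m|$ only bounds $|a_m+|b_m||$, which can be strictly smaller than $|a_m|+|b_m|$ unless $a_m$ happens to be nonnegative, and no ``taking real parts in the right direction'' recovers the loss, since Carath\'eodory controls the modulus of the coefficient, not a real part. The clean fix is exactly the alignment you gesture at in your last paragraph: choose $\varepsilon$ so that $\varepsilon b_m$ has the same argument as $a_m$ (equivalently, take the supremum of $|a_m+\varepsilon b_m|$ over $|\varepsilon|=1$, which is $|a_m|+|b_m|$); with that one-line change the combined estimate follows immediately. Your sharpness construction also works: solving $h_0'(z)+\mu z h_0''(z)=\rho+(1-\rho)(1+z^{m-1})/(1-z^{m-1})$ with $g_0\equiv 0$ gives a member of $W_{\mathcal{H}}^0(\mu,\rho)$ with $a_m=2(1-\rho)/\bigl(m(1+\mu(m-1))\bigr)$, so equality holds for each fixed $m$; this is consistent with the extremal function $z+\sum_{m\ge 2}\frac{2(-1)^{m-1}(1-\rho)}{m(1+\mu(m-1))}z^m$ that the paper uses later in Theorem~\ref{prajaclass}.
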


\begin{lemma}\cite{RajPRAJ}\label{W-koebe}
	let $f=h+\bar{g}\in   W_{\mathcal{H}}^0(\mu,\rho)$. Then  for $|z|=r,$ we have the sharp inequality
	\begin{equation*}
	|f(z)|\geq |z|-2\sum_{m=2}^{\infty}\frac{(-1)^{m-1}(1-\rho)}{m(1+\mu(m-1))}|z|^m.
	\end{equation*}
\end{lemma}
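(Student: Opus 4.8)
The plan is to deduce this estimate from the growth theorem for the associated \emph{analytic} class $W(\mu,\rho)=\{F\in\mathcal A:\Re(F'(z)+\mu zF''(z))>\rho\}$, exploiting that at any fixed point the harmonic value $h(z)+\overline{g(z)}$ coincides with the value of one of the analytic ``companions'' of $f$. Write $\varphi=h'+\mu zh''$ and $\psi=g'+\mu zg''$, so that membership in $W_{\mathcal{H}}^0(\mu,\rho)$ means $\Re\varphi(z)>\rho+|\psi(z)|$ on $\mathbb D$. For every unimodular $\varepsilon$ the normalized analytic function $F_\varepsilon:=h+\varepsilon g$ satisfies $\Re(F_\varepsilon'+\mu zF_\varepsilon'')=\Re(\varphi+\varepsilon\psi)\ge\Re\varphi-|\psi|>\rho$, hence $F_\varepsilon\in W(\mu,\rho)$. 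Given $z$, choose $\varepsilon_0=e^{-2i\arg g(z)}$; then $\varepsilon_0 g(z)=\overline{g(z)}$, so $f(z)=F_{\varepsilon_0}(z)$ and therefore $|f(z)|\ge\inf\{|F(z)|:F\in W(\mu,\rho)\}$. It thus suffices to establish the sharp lower bound for the analytic class.

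\noindent\textbf{The analytic growth theorem.} For $F\in W(\mu,\rho)$ one has $F'(z)+\mu zF''(z)\prec\Phi(z)$ with $\Phi(z)=\frac{1+(1-2\rho)z}{1-z}$, the conformal map of $\mathbb D$ onto $\{\Re w>\rho\}$. Applying the integrating factor $z^{1/\mu-1}$ to the operator $F'+\mu zF''$ and integrating twice gives the identity
\begin{equation*}
\frac{F(z)}{z}=\int_0^1\!\!\int_0^1\frac{s^{1/\mu-1}}{\mu}\bigl(F'(suz)+\mu suz\,F''(suz)\bigr)\,ds\,du ,
\end{equation*}
which presents $F(z)/z$ as an average, against a probability measure on $[0,1]^2$, of the values of $F'+\mu zF''$. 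Since integral operators of this type carry subordination to a convex majorant into subordination, it follows that $F(z)/z\prec G_0(z)/z$, where $G_0$ is the extremal function with $G_0'+\mu zG_0''=\Phi$, i.e.
\begin{equation*}
G_0(z)=z+\sum_{m=2}^{\infty}\frac{2(1-\rho)}{m\bigl(1+\mu(m-1)\bigr)}z^m .
\end{equation*}
As $G_0(\zeta)/\zeta$ is nonvanishing with positive real part on $\mathbb D$, the minimum of its modulus over $|\zeta|=r$ is attained at $\zeta=-r$; hence $|F(z)|\ge-G_0\bigl(-|z|\bigr)$, which, after expanding $G_0$ as a power series, is exactly the right-hand side of the Lemma. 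The companion upper estimate $|F(z)|\le G_0(|z|)$ comes out of the same argument.

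\noindent\textbf{Sharpness and the crux.} Combining the two steps yields $|f(z)|\ge-G_0(-|z|)$ for all $f\in W_{\mathcal{H}}^0(\mu,\rho)$, and equality holds for $G_0$ itself (with $g\equiv0$) at $z=-|z|$: indeed $G_0\in W_{\mathcal{H}}^0(\mu,\rho)$ because $G_0'+\mu zG_0''=\Phi$ has real part larger than $\rho$ throughout $\mathbb D$, and the Taylor coefficients of $G_0$ realize equality in the coefficient bound of Lemma~\ref{Wbound}. The reduction in the first paragraph is short; the real work is the analytic growth theorem — verifying that the integral operator above transports $F'+\mu zF''\prec\Phi$ into $F(z)/z\prec G_0(z)/z$, and then pinning down $\zeta=-r$ as the minimizer of $|G_0(\zeta)/\zeta|$ on the circle $|\zeta|=r$. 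I expect that minimum-modulus step, which hinges on the convexity of $\Phi$ and the sign pattern of the coefficients of $G_0$, to be the main obstacle; everything else is bookkeeping.
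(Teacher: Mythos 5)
The paper offers no proof of this lemma --- it is imported from \cite{RajPRAJ} --- so your argument has to stand on its own, and in outline it does; it is also essentially the route the source takes. The reduction to the analytic class is clean and correct: $F_\varepsilon=h+\varepsilon g\in W(\mu,\rho)$ for every unimodular $\varepsilon$, and your choice $\varepsilon_0=e^{-2i\arg g(z)}$ makes $f(z)=F_{\varepsilon_0}(z)$, so only the analytic growth theorem of Gao and Zhou \cite{GaoZohu-2007} is needed. That step is also sound as sketched: your double integral is the Hallenbeck--Ruscheweyh averaging operator applied twice, and their theorem gives both the transported subordination $F(z)/z\prec G_0(z)/z$ and the convexity of the intermediate majorant $G_0'$ required for the second integration (treat $\mu=0$ separately, where one integration suffices). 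The minimum-modulus step you single out as the obstacle is actually painless once you phrase it through the real part: $G_0(\zeta)/\zeta$ is convex with real coefficients and positive real part, so $|F(z)/z|\ge\Re\bigl(F(z)/z\bigr)\ge\min_{|\zeta|=r}\Re\bigl(G_0(\zeta)/\zeta\bigr)=-G_0(-r)/r>0$.

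The one genuine problem is a sign. What your argument proves is
\[
|f(z)|\ \ge\ -G_0(-|z|)\ =\ |z|+2\sum_{m=2}^{\infty}\frac{(-1)^{m-1}(1-\rho)}{m(1+\mu(m-1))}|z|^m,
\]
whereas the statement as printed has a minus sign in front of the sum, so your claim that the expansion of $-G_0(-|z|)$ ``is exactly the right-hand side of the Lemma'' is false for the Lemma as written. In fact the printed inequality cannot hold: its right-hand side is $|z|+\tfrac{1-\rho}{1+\mu}|z|^2+O(|z|^3)>|z|$ near the origin, and it is violated by the extremal map $-G_0(-z)=z-\sum_{m\ge 2}\frac{2(1-\rho)}{m(1+\mu(m-1))}z^m$, which lies in $W_{\mathcal H}^0(\mu,\rho)$ with $g\equiv 0$ and satisfies $|{-G_0(-r)}|<r$ for small $r$. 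So you have proved the corrected version of the Lemma, which is the one whose coefficients saturate Lemma~\ref{Wbound}; do not assert agreement with the printed formula, and note that the sign propagates into the distance estimate \eqref{KobW} and hence into the equation defining $r^*$ in Theorem~\ref{prajaclass}.
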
	
\begin{theorem}\label{prajaclass}
	Let $f(z)=z+\sum_{m=2}^{\infty}a_m z^m+ \overline{\sum_{m=2}^{\infty}b_m {z}^m} \in  W_{\mathcal{H}}^0(\mu,\rho) $. Then
	\begin{equation*}
	|z|+\sum_{m=2}^{\infty}(|a_m|+|b_m|) |z|^m \leq d(f(0), \partial{f(\mathbb{D})})
	\end{equation*}
	for $|z|\leq r^*$, where $r^*$ is the unique positive root in $(0,1)$ of 
	\begin{equation*}
	r+\sum_{m=2}^{\infty}\frac{2(1-\rho)}{m(1+\mu(m-1))}r^m= 1-2\sum_{m=2}^{\infty}\frac{(-1)^{m-1}(1-\rho)}{m(1+\mu(m-1))}.
	\end{equation*}
	The radius $r^*$ is the Bohr radius for the class $W_{\mathcal{H}}^0(\mu,\rho)$.
\end{theorem}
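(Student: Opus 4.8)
The plan is to run the same three-step argument as in the proof of Theorem~\ref{mainthm}, fed now by the two sharp estimates for $W_{\mathcal H}^0(\mu,\rho)$ in Lemmas~\ref{Wbound} and~\ref{W-koebe} in place of \eqref{T-sum} and the elementary growth bound; the structural difference is that Lemma~\ref{Wbound} controls $|a_m|+|b_m|$ term by term rather than uniformly, so the answer will be the root of a transcendental equation rather than of a quadratic. First I would pass from the growth inequality to a covering statement. Since members of $W_{\mathcal H}^0(\mu,\rho)$ are sense-preserving harmonic mappings, hence open, $d(f(0),\partial f(\mathbb D))=\liminf_{|z|\to 1^-}|f(z)-f(0)|\ge\liminf_{|z|\to1^-}|f(z)|$, and by Lemma~\ref{W-koebe}, for $|z|=r$,
\[
|f(z)|\ \ge\ r-2\sum_{m=2}^{\infty}\frac{(-1)^{m-1}(1-\rho)}{m(1+\mu(m-1))}\,r^{m}.
\]
Since $a_m:=\tfrac{1-\rho}{m(1+\mu(m-1))}$ decreases to $0$, this series converges and Abel's limit theorem permits letting $r\to1^-$, giving
\[
d(f(0),\partial f(\mathbb D))\ \ge\ C:=1-2\sum_{m=2}^{\infty}\frac{(-1)^{m-1}(1-\rho)}{m(1+\mu(m-1))}\ >\ 1>0,
\]
the strict inequality holding because the alternating sum $\sum_{m\ge2}(-1)^{m-1}a_m=-(a_2-a_3)-(a_4-a_5)-\cdots$ is negative. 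This is the analogue of Corollary~\ref{covering} for the present class.

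Next I would invoke Lemma~\ref{Wbound}: for $|z|=r$,
\[
|z|+\sum_{m=2}^{\infty}(|a_m|+|b_m|)\,|z|^{m}\ \le\ \Phi(r):=r+\sum_{m=2}^{\infty}\frac{2(1-\rho)}{m(1+\mu(m-1))}\,r^{m}.
\]
As a power series with nonnegative coefficients, $\Phi$ is continuous and strictly increasing on $[0,1)$ with $\Phi(0)=0<C$, while $\lim_{r\to1^-}\Phi(r)>C$: this is $+\infty$ when $\mu=0$, and for $\mu>0$ one computes $\Phi(1)-C=4(1-\rho)\sum_{m\ \mathrm{odd},\,m\ge3}\tfrac{1}{m(1+\mu(m-1))}>0$. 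Hence, by the Intermediate Value Theorem, there is a unique $r^{*}\in(0,1)$ with $\Phi(r^{*})=C$ — exactly the root described in the statement — and for $|z|=r\le r^{*}$,
\[
|z|+\sum_{m=2}^{\infty}(|a_m|+|b_m|)\,|z|^{m}\ \le\ \Phi(r)\ \le\ \Phi(r^{*})=C\ \le\ d(f(0),\partial f(\mathbb D)).
\]

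For sharpness I would exhibit a single function saturating everything. Take $F(z)=z+\sum_{m=2}^{\infty}\tfrac{2(1-\rho)}{m(1+\mu(m-1))}z^{m}$; a short computation gives $F'(z)+\mu zF''(z)=\dfrac{1+(1-2\rho)z}{1-z}$, whose real part exceeds $\rho$ on $\mathbb D$, so $F\in W_{\mathcal H}^0(\mu,\rho)$ with $g\equiv0$. Equality holds in Lemma~\ref{Wbound} for every $m$, so the majorant of $F$ is $\Phi(r)$ exactly; and one checks $F$ attains equality in the growth estimate of Lemma~\ref{W-koebe}, whence $d(F(0),\partial F(\mathbb D))=C$. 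Thus for every $r>r^{*}$ the Bohr inequality fails for $F$, and $r^{*}$ is precisely the Bohr radius of $W_{\mathcal H}^0(\mu,\rho)$.

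The step needing the most care is the first: justifying that the limit $r\to1^-$ may be taken inside the growth estimate (Abel summation, using monotonicity of $(a_m)$) and, above all, that the resulting constant $C$ bounds $d(f(0),\partial f(\mathbb D))$ rather than merely $\inf_{|z|=r}|f(z)|$ for a fixed $r<1$ — which rests on $f$ being an open mapping. The monotonicity/IVT comparison in the second step is then routine, the one class-specific point being the verification $\Phi(1^-)>C$, where the parity of the index in the alternating sum is what makes the difference come out positive.
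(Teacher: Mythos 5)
Your main chain of reasoning is the same as the paper's proof of Theorem~\ref{prajaclass}: a lower bound for $d(f(0),\partial f(\mathbb D))$ from Lemma~\ref{W-koebe}, the majorant $\Phi(r)=r+\sum_{m\ge2}\tfrac{2(1-\rho)}{m(1+\mu(m-1))}r^m$ from Lemma~\ref{Wbound}, monotonicity plus the Intermediate Value Theorem to define $r^*$, and then the three-term comparison for $r\le r^*$. Your added justifications (openness of $f$ to pass from the growth estimate to a distance bound, Abel's theorem for $r\to1^-$, and the explicit check that $\Phi(1^-)$ exceeds the constant, including the divergent case $\mu=0$) are correct and in fact more careful than the paper's.

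The gap is in the sharpness step. Your $F(z)=z+\sum_{m\ge2}\tfrac{2(1-\rho)}{m(1+\mu(m-1))}z^m$ does lie in $W_{\mathcal H}^0(\mu,\rho)$ (the identity $F'(z)+\mu zF''(z)=\tfrac{1+(1-2\rho)z}{1-z}$ is right), but the claim $d(F(0),\partial F(\mathbb D))=C$, where $C:=1-2\sum_{m\ge2}(-1)^{m-1}a_m$ and $a_m:=\tfrac{1-\rho}{m(1+\mu(m-1))}$, is false. Since all coefficients of $F$ are positive, $|F(-r)|=r-2a_2r^2+2a_3r^3-\cdots$, so $d(F(0),\partial F(\mathbb D))\le\lim_{r\to1^-}|F(-r)|=1-2a_2+2a_3-\cdots=2-C<1<C$. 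You noted yourself that $C>1$; that should have been a red flag, since by the Schwarz lemma no normalized univalent analytic map (and every analytic member of this close-to-convex class is such) can have distance from the origin to the boundary of its image exceeding $1$. In particular $F$ does not attain equality in Lemma~\ref{W-koebe} as quoted: the quoted lower bound is $r+2a_2r^2-2a_3r^3+\cdots$, which $F$ (and even $f(z)=z$) violates for small $r$ — the lemma as reproduced in the paper carries a sign slip, the standard estimate being $|f(z)|\ge r+2\sum_{m\ge2}(-1)^{m-1}a_mr^m$. To be fair, the paper's own sharpness function $f_0(z)=-F(-z)$ suffers from exactly the same defect, and with the corrected lemma your $F$ is indeed the extremal function; but then the right-hand side of the equation defining $r^*$ must read $1+2\sum_{m\ge2}\tfrac{(-1)^{m-1}(1-\rho)}{m(1+\mu(m-1))}$ rather than $1-2\sum_{m\ge2}\tfrac{(-1)^{m-1}(1-\rho)}{m(1+\mu(m-1))}$. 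As written, your extremal-function computation does not establish sharpness for the stated $r^*$.
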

\begin{proof}
	From Lemma~\ref{W-koebe}, it follows that the distance between origin and the boundary of $f(\mathbb{D})$ satisfies 
	\begin{equation}\label{KobW}
	d(f(0), \partial{f(\mathbb{D})})\geq \left(1-2\sum_{m=2}^{\infty}\frac{(-1)^{m-1}(1-\rho)}{m(1+\mu(m-1))} \right).
	\end{equation}
	Let us consider the continuous function
	\begin{equation*}
	H(r):= 
	r+\sum_{m=2}^{\infty}\frac{2(1-\rho)}{m(1+\mu(m-1))}r^m- \left(1-2\sum_{m=2}^{\infty}\frac{(-1)^{m-1}(1-\rho)}{m(1+\mu(m-1))} \right).
	\end{equation*}
	Now 
	\begin{equation*}
	H'(r)=1+\sum_{m=2}^{\infty}\frac{2m(1-\rho)}{m(1+\mu(m-1))}r^{m-1}>0
	\end{equation*}
	for all $r\in (0,1)$, which implies that $H$ is a strictly increasing continuous function. Note that $H(0)<0$ and 
	\begin{equation*}
	H(1)=\sum_{m=2}^{\infty}\frac{2(1-\rho)}{m(1+\mu(m-1))} + \sum_{m=2}^{\infty}\frac{2(-1)^{m-1}(1-\rho)}{m(1+\mu(m-1))} >0.
	\end{equation*}
	Thus by Intermediate Value Theorem for continuous function, we let $r^*$ be the unique root of $H(r)=0$ in $(0,1)$. Now using Lemma~\ref{Wbound} and the inequality \eqref{KobW}, we have
	\begin{align*}
	|z|+\sum_{m=2}^{\infty}(|a_m|+|b_m|) |z|^m
	&\leq r+\sum_{m=2}^{\infty}\frac{2(1-\rho)}{m(1+\mu(m-1))}r^m\\
	&\leq r^*+\sum_{m=2}^{\infty}\frac{2(1-\rho)}{m(1+\mu(m-1))}(r^*)^m\\
	&= \left(1-2\sum_{m=2}^{\infty}\frac{(-1)^{m-1}(1-\rho)}{m(1+\mu(m-1))} \right)\\
	&\leq d(f(0), \partial{f(\mathbb{D})}),
	\end{align*}
	which hold for $r\leq r^*$. Now consider the analytic function
	$$f(z)=z+\sum_{m=2}^{\infty}\frac{2(-1)^{m-1}(1-\rho)}{m(1+\mu(m-1))}z^m.$$
	Then clearly $f\in W_{\mathcal{H}}^0(\mu,\rho)$ and at $|z|=r^*$, we get
	$|z|+\sum_{m=2}^{\infty}(|a_m|+|b_m|) |z|^m=d(f(0), \partial{f(\mathbb{D})}).$
	Hence the radius $r^*$ is the Bohr radius for the class $W_{\mathcal{H}}^0(\mu,\rho)$. \qed
\end{proof}

Now using Theorem~\ref{prajaclass}, we can obtain the Bohr radius for the classes $W_{\mathcal{H}}^0(\mu,0)=W_{\mathcal{H}}^0(\mu)$, $W_{\mathcal{H}}^0(0,\rho)=P_{\mathcal{H}}^0(\rho)$, $W_{\mathcal{H}}^0(1,0)=W_{\mathcal{H}}^0$ and $W_{\mathcal{H}}^0(0,0)=P_{\mathcal{H}}^0$. Here we mention the following:
\begin{corollary}
	Let $f(z)=z+\sum_{m=2}^{\infty}a_m z^m+ \overline{\sum_{m=2}^{\infty}b_m {z}^m} \in  P_{\mathcal{H}}^0 $. Then
	\begin{equation*}
	|z|+\sum_{m=2}^{\infty}(|a_m|+|b_m|) |z|^m \leq d(f(0), \partial{f(\mathbb{D})})
	\end{equation*}
	for $|z|\leq r^*$, where the Bohr radius $r^*$ is the unique positive root in $(0,1)$ of 
	\begin{equation}
	r+\sum_{m=2}^{\infty}\frac{2}{m}r^m= 1-2\sum_{m=2}^{\infty}\frac{(-1)^{m-1}}{m}.
	\end{equation}
\end{corollary}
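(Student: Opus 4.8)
The plan is to derive this Corollary as the $\mu = 0$, $\rho = 0$ specialization of Theorem~\ref{prajaclass}. First I would observe that setting $\mu = 0$ collapses the weight $m(1+\mu(m-1))$ appearing throughout Theorem~\ref{prajaclass} to simply $m$, while setting $\rho = 0$ turns the factor $2(1-\rho)$ into $2$. Making these substitutions in the defining equation
\begin{equation*}
r+\sum_{m=2}^{\infty}\frac{2(1-\rho)}{m(1+\mu(m-1))}r^m= 1-2\sum_{m=2}^{\infty}\frac{(-1)^{m-1}(1-\rho)}{m(1+\mu(m-1))}
\end{equation*}
yields precisely the equation stated in the Corollary, so the claimed characterization of $r^*$ is immediate.

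Next I would note that the hypotheses needed to invoke Theorem~\ref{prajaclass} are trivially met, since $P_{\mathcal{H}}^0 = W_{\mathcal{H}}^0(0,0)$ is an admissible choice of parameters ($\mu = 0 \geq 0$ and $\rho = 0 \in [0,1)$). In particular, Lemma~\ref{Wbound} and Lemma~\ref{W-koebe} apply with these parameters, giving the coefficient bound $|a_m|+|b_m| \leq 2/m$ and the covering estimate $d(f(0),\partial f(\mathbb{D})) \geq 1 - 2\sum_{m=2}^{\infty}(-1)^{m-1}/m$. The existence and uniqueness of a root of the displayed equation in $(0,1)$ follows, exactly as in the proof of Theorem~\ref{prajaclass}, from the Intermediate Value Theorem: the function $H(r) = r + \sum_{m=2}^{\infty}\frac{2}{m}r^m - \bigl(1 - 2\sum_{m=2}^{\infty}\frac{(-1)^{m-1}}{m}\bigr)$ is continuous and strictly increasing on $(0,1)$ with $H(0) < 0$ and $H(1) > 0$.

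Finally, I would exhibit the extremal function $f(z) = z + \sum_{m=2}^{\infty}\frac{2(-1)^{m-1}}{m}z^m$ (equivalently $f(z) = 2\log(1+z) - z$), which lies in $P_{\mathcal{H}}^0$ and for which equality holds at $|z| = r^*$, showing sharpness. There is no real obstacle here: the entire content is the parameter substitution $\mu = \rho = 0$ into Theorem~\ref{prajaclass}, and the only point warranting a line of checking is that the simplification of the weights $m(1+\mu(m-1)) \mapsto m$ is carried out consistently on both sides of the root equation and in the extremal function.
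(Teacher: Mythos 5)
Your proposal is correct and follows exactly the paper's route: the corollary is obtained by specializing Theorem~\ref{prajaclass} to $\mu=0$, $\rho=0$ (so $W_{\mathcal{H}}^0(0,0)=P_{\mathcal{H}}^0$), with the weights $m(1+\mu(m-1))$ collapsing to $m$ and $2(1-\rho)$ to $2$, and with the same extremal function $f(z)=z+\sum_{m=2}^{\infty}\tfrac{2(-1)^{m-1}}{m}z^m$. Nothing further is needed.
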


For the case $g\equiv0$, the class $W_{\mathcal{H}}^0(\mu,\rho)$ reduces to $W(\mu,\rho)$, the class introduced by Gao and Zohu~\cite{GaoZohu-2007}, and we have:
\begin{corollary}
	The Bohr radius of the classes $W(\mu,\rho)$ and $W_{\mathcal{H}}^0(\mu,\rho)$  is same.
\end{corollary}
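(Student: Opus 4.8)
The plan is to observe that $W(\mu,\rho)$ sits inside $W_{\mathcal{H}}^0(\mu,\rho)$ as the slice $g\equiv 0$, and that the extremal function governing the Bohr radius in Theorem~\ref{prajaclass} already lies in that slice. First I would note that for $f=h+\bar g$ with $g\equiv 0$ the defining inequality $\Re(h'(z)+\mu z h''(z)-\rho)>|g'(z)+\mu z g''(z)|=0$ is exactly the condition $\Re(h'(z)+\mu z h''(z))>\rho$ defining $W(\mu,\rho)$, so $W(\mu,\rho)\subseteq W_{\mathcal{H}}^0(\mu,\rho)$. Writing $r^*$ for the unique root in $(0,1)$ produced in Theorem~\ref{prajaclass}, this inclusion immediately gives that the Bohr radius of $W(\mu,\rho)$ is at least $r^*$.

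For the reverse inequality I would simply rerun the argument of Theorem~\ref{prajaclass} inside the smaller class. Lemma~\ref{Wbound}, applied to any $f\in W(\mu,\rho)$ viewed as an element of $W_{\mathcal{H}}^0(\mu,\rho)$ with $b_m=0$, yields $|a_m|\le \frac{2(1-\rho)}{m(1+\mu(m-1))}$, and Lemma~\ref{W-koebe} yields the covering estimate $d(f(0),\partial f(\mathbb{D}))\ge 1-2\sum_{m=2}^{\infty}\frac{(-1)^{m-1}(1-\rho)}{m(1+\mu(m-1))}$. These are precisely the two ingredients used in the proof of Theorem~\ref{prajaclass}; combining them with the strict monotonicity of the auxiliary function $H(r)$ from that proof shows $|z|+\sum_{m=2}^{\infty}|a_m||z|^m\le d(f(0),\partial f(\mathbb{D}))$ for $|z|\le r^*$ for every $f\in W(\mu,\rho)$.

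Finally I would check sharpness within $W(\mu,\rho)$ itself: the function $f(z)=z+\sum_{m=2}^{\infty}\frac{2(-1)^{m-1}(1-\rho)}{m(1+\mu(m-1))}z^m$ used in Theorem~\ref{prajaclass} has $g\equiv 0$, hence belongs to $W(\mu,\rho)$, and at $|z|=r^*$ it attains equality $|z|+\sum_{m=2}^{\infty}|a_m||z|^m=d(f(0),\partial f(\mathbb{D}))$. Therefore the Bohr radius of $W(\mu,\rho)$ equals $r^*$, the same value obtained for $W_{\mathcal{H}}^0(\mu,\rho)$. There is no real obstacle here; the only point needing care is to confirm that both the sharp coefficient bound of Lemma~\ref{Wbound} and the sharp covering bound of Lemma~\ref{W-koebe} are simultaneously attained by a function with vanishing co-analytic part, and that is exactly what the displayed extremal function provides.
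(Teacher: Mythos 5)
Your proposal is correct and matches the paper's (implicit) reasoning: the corollary follows because $W(\mu,\rho)$ is exactly the $g\equiv 0$ slice of $W_{\mathcal{H}}^0(\mu,\rho)$, the coefficient and covering bounds of Lemmas~\ref{Wbound} and~\ref{W-koebe} apply verbatim there, and the extremal function of Theorem~\ref{prajaclass} is analytic, hence already lies in $W(\mu,\rho)$. The second paragraph is logically redundant (the inclusion alone gives the lower bound on the Bohr radius), but this does not affect correctness.
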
	

\section{Conclusion and future scope}
We studied the Bohr phenomenon for the class $\mathcal{S}^0_{h+\bar{g}}(M)$ and pointed out its several applications in context of various known classes. Further the Bohr radius for the classes of $q$-starlike and $q$-convex functions studied in \cite{AMustafa-2019} can be obtained by mere an application of our result. Similarly many other classes can also be dealt for Bohr radius. Since $\mathcal{S}^{**}_{\tau}(C,D)\subset \mathcal{S}^{**}_{\mathcal{H}}(C,D)$, ${S}^*_{\tau}(C,D)\subset \mathcal{S}^{*}_{\mathcal{H}}(C,D)$ and ${S}^{c}_{\tau}(C,D)\subset \mathcal{S}^{c}_{\mathcal{H}}(C,D)$. If we let $r_{*}$ be the Bohr radius for a well defined class $\mathcal{F}$, then we conclude that $r_{*}\leq r^*$ whenever $\mathcal{F}$ is $\mathcal{S}^{**}_{\mathcal{H}}(C,D)$ or $\mathcal{S}^{*}_{\mathcal{H}}(C,D)$ or $\mathcal{S}^{c}_{\mathcal{H}}(C,D)$. However finding $r_{*}$ is still open.

\section*{Conflict of interest}
	The authors declare that they have no conflict of interest.

\end{document}